\theoremstyle{plain}
\newtheorem{theorem}{Theorem}[section]
\newtheorem{proposition}[theorem]{Proposition}
\theoremstyle{definition}
\newtheorem{definition}[theorem]{Definition}
\newtheorem{example}[theorem]{Example}
\newtheorem{remark}[theorem]{Remark}
\newtheorem{question}[theorem]{Question}
\begin{document}

\title{On a banded link presentation of knotted surfaces}
\dedicatory{Dedicated to J\'ozef H. Przytycki on his sixtieth birthday}
\author{Micha\l\ Jab\l onowski}
\date{\today}
\address{Institute of Mathematics, University of Gda\'nsk\\
Wita Stwosza 57, 80-952 Gda\'nsk, Poland\\
E-mail: michal.jablonowski@gmail.com}

\thanks{Research of the author was partially supported by grant BW 538-5100-B854-15}

\subjclass[2010]{Primary 57Q45; Secondary 57M99, 57R65.}

\keywords{knotted surfaces, diagrams, moves, banded link, flat form}

\begin{abstract}

We will discuss a method for visual presentation of knotted surfaces in the four space, by examining a number and a position of its Morse's critical points. Using this method, we will investigate surface-knot with one critical point of index $1$. Then we show infinitely many mutually distinct surface-knots that has an embedding with two critical points of index $1$. Next we define a long flat form of a banded link for any surface-knot and show diagrammatically a long flat form of $n$-twist-spun $(2,t)$-torus knots.
\end{abstract}

\maketitle

\section{Introduction}

In this paper we will look closer to a visual method of presentation of knotted surfaces in $\mathbb{R}^4$, by examining a number and a position of its Morse's critical points. Presenting diagrams of surfaces as embeddings of graph-like structures in $\mathbb{R}^3$ may be a good method for people not accustomed to the forth-dimension, to ponder open questions about knotted surfaces, especially of popular families of knotted spheres $\mathbb{S}^2$ obtained by a twist-spinning of classical knots.

The paper is organized as follows. First we give basic definitions and theorems needed for our visual presentation of knotted surfaces in the four space, with examples of trivial and nontrivial banded links. Later we will present properties concerning the number of maxima, minima and saddle points of closed surfaces embedded in $\mathbb{R}^4$. 

Then we define a band number of a surface-knot $F$, denoted by $bn(F)$, as the minimum number of bands in a banded link among all of banded links for any surface-knot equivalent to $F$. Using this method, of classical unknotted links with bands attached to them in $\mathbb{R}^3$, we will show that if a banded link has one band (i.e. the corresponding surface-knot has one critical point of index 1) then it is unknotted. Then we show that there are infinitely many mutually inequivalent surface-knots with the band number equal to 2. Next we define a long flat form of a banded link for any surface-knot and show a long flat form of $n$-twist-spun $T(2,2k+1)$-torus knots diagrammatically.

\section{Basic definitions and theorems}

We will be assuming that all manifolds have the standard smooth structure and maps between them are smooth. An embedding (or its image) of a closed surface into $\mathbb{R}^4$ is called the \emph{knotted surface}, or sometimes \emph{surface-knot} if it is connected. Two knotted surfaces are \emph{equivalent} or have the same \emph{type}, if there exists an orientation preserving auto-homeomorphism of $\mathbb{R}^4$, taking one of those surfaces to the other.

\subsection{Hyperbolic splitting and marked graph diagrams}

To describe a knotted surface in $\mathbb{R}^4$, we will use transverse cross-sections $\mathbb{R}^3\times\{t\}$ for $t\in\mathbb{R}$, denoted by $\mathbb{R}^3_t$. This method introduced by R.H. Fox and J.W. Milnor was later broadly presented in \cite{Fox62}.

\begin{theorem}[Lomonaco \cite{Lom81}, Kawauchi, Shibuya, and Suzuki \cite{KSS82}]

For any knotted surface $F$, there exists a knotted surface $F'$ satisfying the following:
\begin{enumerate}[label={(\roman*)}]
\item $F'$ is equivalent to $F$ and has only finitely many Morse's critical points.
\item All maximal points of $F'$ lie in $\mathbb{R}^3_1$.
\item All minimal points of $F'$ lie in $\mathbb{R}^3_{-1}$.
\item All saddle points of $F'$ lie in $\mathbb{R}^3_0$.
\end{enumerate}
\end{theorem}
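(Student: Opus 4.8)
The plan is to produce $F'$ from $F$ by a composition of orientation-preserving homeomorphisms of $\mathbb{R}^4$ (in fact ambient isotopies): a small perturbation making the last-coordinate function Morse on $F$, a rearrangement of its critical levels, and, if needed, a rescaling of the $t$-direction. Since each factor is an orientation-preserving auto-homeomorphism of $\mathbb{R}^4$, the resulting $F'$ is equivalent to $F$, and properties (i)--(iv) will hold by construction.

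First I would put $F$ in general position with respect to the height function $t\colon\mathbb{R}^4\to\mathbb{R}$. Writing $h=t|_F$, a standard transversality argument shows that the set of directions $v\in S^3$ for which $\langle\,\cdot\,,v\rangle|_F$ is a Morse function is dense; hence an arbitrarily small rotation of $\mathbb{R}^4$, realized by an ambient isotopy, turns $h$ into a Morse function. Because $F$ is closed, the critical points of a Morse function are isolated and hence finite in number, which is (i), and a further small isotopy makes the critical values pairwise distinct. Since $\dim F=2$, every critical point of $h$ has Morse index $0$ (a local minimum), $1$ (a saddle), or $2$ (a local maximum).

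Next I would rearrange the critical levels using a Morse--Smale gradient-like vector field $\xi$ for $h$; genericity guarantees one exists. In a Morse--Smale system a $\xi$-trajectory from a critical point $p$ to a critical point $q$ can occur only when $\mathrm{ind}(p)>\mathrm{ind}(q)$, so in particular there is no trajectory between two critical points of the same index. By Milnor's handle-rearrangement lemma (which applies equally to the height function on a closed manifold) one can then prescribe the critical values -- realizing the change through an ambient isotopy of $\mathbb{R}^4=\mathbb{R}^3\times\mathbb{R}$ that slides critical points along $\xi$ in the $t$-direction -- to be any assignment $c(\,\cdot\,)$ subject only to $c(q)<c(p)$ whenever a $\xi$-trajectory runs from $p$ to $q$. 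Choosing $c\equiv -1$ on the minima, $c\equiv 0$ on the saddles, and $c\equiv 1$ on the maxima is admissible, because the only possible trajectory types are $\mathrm{max}\to\mathrm{saddle}$, $\mathrm{max}\to\mathrm{min}$ and $\mathrm{saddle}\to\mathrm{min}$, requiring merely $0<1$, $-1<1$ and $-1<0$. This puts all minima in $\mathbb{R}^3_{-1}$, all saddles in $\mathbb{R}^3_{0}$ and all maxima in $\mathbb{R}^3_{1}$, giving (ii)--(iv); an optional homeomorphism of the form $(\mathbf{x},t)\mapsto(\mathbf{x},\psi(t))$ fixing $-1,0,1$ absorbs any rescaling done along the way.

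I expect the main obstacle to lie in justifying that these level changes are genuinely realized by an ambient isotopy of $\mathbb{R}^4$ -- concretely, that any two saddle points can be slid into a common cross-section. Moving a minimum downward past a lower saddle, or a maximum upward past a higher saddle, is easy: a minimum is a disk capping a small circle from below, the saddle beneath it performs surgery on circles not meeting that small circle, so one pushes the cap together with a short vertical annulus down past the saddle's level, and dually for maxima. The delicate case is two saddles ``in series'', where the upper band is attached along a component produced by resolving the lower band: to bring the two to one level one must first isotope the upper band, inside a cross-section $\mathbb{R}^3_s$, until it is disjoint from the lower band, exploiting the room in $\mathbb{R}^3$ to slide a band along the link clear of the other. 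Making this sliding precise, and checking that iterating it handles arbitrarily many saddles, is the technical heart; the general-position step and the bookkeeping in Milnor's lemma are routine.
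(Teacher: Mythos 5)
The paper does not prove this statement at all: it is imported as a known theorem of Lomonaco \cite{Lom81} and Kawauchi--Shibuya--Suzuki \cite{KSS82}, so there is no in-paper argument to compare with. Your outline does follow the strategy of those sources (perturb so the height function is Morse, then rearrange the critical levels by ambient isotopy), but judged on its own it has a genuine gap precisely where you locate the ``technical heart'', and that gap is the actual content of the theorem. Milnor's rearrangement lemma modifies the Morse function on the abstract surface; it does not by itself produce an ambient isotopy of $\mathbb{R}^4$ carrying the embedded $F$ to an embedding whose height function has prescribed critical values. In particular, the absence of same-index trajectories for a Morse--Smale gradient-like field does not ``realize the change through an ambient isotopy'': one must prove the embedded rearrangement statement that a saddle lying above another saddle can be lowered to the same level. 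Concretely, after pushing the upper saddle's band down through the product region, its band must be made disjoint, inside a single cross-section $\mathbb{R}^3_s$, from the lower band, with its attaching arcs slid off the arcs $b_1(I\times\partial I)$ created by the lower surgery, so that the two surgeries commute and can be performed simultaneously.

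This can be done -- the core of a band is an arc, and by general position an arc and a band in a $3$-dimensional slice can be made disjoint (after sliding endpoints along the link), after which the bands themselves become disjoint by thinning them to neighbourhoods of their cores -- but in the proposal this is only asserted, not proved, and the induction over arbitrarily many saddles is not addressed. Similarly, pushing a minimum down (or a maximum up) past all other critical levels needs a word about avoiding the other sheets of $F$: one shrinks the cap and pushes it along a thin vertical tube over a generic point, which misses $F$ because the projection of the $2$-dimensional surface to $\mathbb{R}^3$ has measure zero. So the approach is the standard and correct one, but as written the argument stops at the step that constitutes the theorem; you should either carry out the band-disjointness and iteration argument in detail or cite \cite{KSS82} (or \cite{Lom81}) for it.
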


We call a representation $F'$ in the theorem a \emph{hyperbolic splitting} of $F$. The zero section $\mathbb{R}^3_0\cap F'$ of the hyperbolic splitting gives us a 4-valent graph (with possible loops without vertices). We assign to each vertex a \emph{marker} (here a thin rectangle) that informs us about one of the two possible types of saddle points (see Fig. \ref{MJ_103}) depending on shapes of $\mathbb{R}^3_{\epsilon}\cap F'$ or $\mathbb{R}^3_{-\epsilon}\cap F'$ for a small number $\epsilon>0$. The resulting graph is called a \emph{marked graph}. 

\begin{figure}[ht]
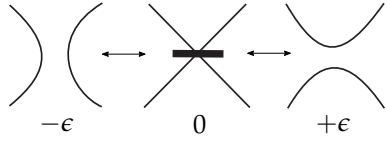

\begin{center}
\begin{lpic}[b(0.5cm)]{./figures/MJ_103(5cm)}
	\lbl[t]{15,-2;$-\epsilon$}
   \lbl[t]{60,-2;$0$}
   \lbl[t]{102,-2;$+\epsilon$}
	\end{lpic}
	\caption{Rules for a resolution of a marker.\label{MJ_103}}
\end{center}
\end{figure}

Making now a projection in general position of this graph to $\mathbb{R}^2$, and imposing crossing types like in the classical knot case, we receive a \emph{marked graph diagram} (terminology also applies to graphs of that kind which do not come from slicing closed surface).

\begin{theorem}[Kawauchi, Shibuya, and Suzuki \cite{KSS82}]

Let $F_i$ $(i=1,2)$ be a knotted surface in a hyperbolic splitting, and $D_i$ the marked graph diagram associated with the cross-section $F_i\cap\mathbb{R}^3_0$. If $D_1=D_2$, then $F_1$ is equivalent to $F_2$.
\end{theorem}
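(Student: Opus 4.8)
The plan is to reconstruct each $F_i$ from its marked graph diagram in three stages --- the slab around $\mathbb{R}^3_0$, the part above it, and the part below it --- and to check that each stage is determined up to ambient isotopy, so that $D_1=D_2$ forces $F_1$ and $F_2$ to be ambient isotopic, hence equivalent.

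\emph{Stage $0$: matching the level-$0$ cross-sections.} Since $D_1=D_2$ as marked graph diagrams, the embedded marked graphs $G_i:=F_i\cap\mathbb{R}^3_0$ are related by an ambient isotopy of $\mathbb{R}^3_0$ carrying markers to markers --- this is the usual fact that a diagram determines a link in $\mathbb{R}^3$ up to isotopy, the markers being only extra decoration at the $4$-valent vertices. Extending this isotopy to $\mathbb{R}^4$ (constant in the $t$-direction near $t=0$, tapered for large $|t|$) by the isotopy extension theorem, we may assume $G_1=G_2=:G$. Because in a hyperbolic splitting all saddles lie on $\mathbb{R}^3_0$ and there are no critical values in a punctured neighbourhood of $0$, for small $\epsilon>0$ each $F_i\cap\bigl(\mathbb{R}^3\times[-\epsilon,\epsilon]\bigr)$ is the canonical saddle cobordism read off from $G$: the standard hyperbolic model near each vertex and a product $(\text{link})\times[-\epsilon,\epsilon]$ elsewhere, with boundary $L_-\sqcup L_+$, where $L_\pm\subset\mathbb{R}^3_{\pm\epsilon}$ is the corresponding resolution of $G$. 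After one further small isotopy we may take $F_1$ and $F_2$ to agree identically on $\mathbb{R}^3\times[-\epsilon,\epsilon]$.

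\emph{Stage $1$: the part above $\mathbb{R}^3_\epsilon$.} Since all maxima lie on $\mathbb{R}^3_1$, the surface $F_i\cap\{t\ge\epsilon\}$ is compact with boundary $L_+$ and has only maxima as critical points; turning it upside down and building it with handles shows it is a disjoint union of disks, one per maximum, each meeting every level $\mathbb{R}^3_t$ with $\epsilon\le t<1$ in a single circle. Following those circles from a level just below $1$ (where they are tiny unlinked round circles) down to $\mathbb{R}^3_\epsilon$ along a family with no critical levels shows that $L_+$ is a trivial link. The decisive point is that such a system of monotone caps over a fixed trivial link $L_+$, with all maxima on $\mathbb{R}^3_1$, is unique up to an ambient isotopy of $\mathbb{R}^4$ fixing $\{t\le\epsilon\}$ pointwise: one uses the level foliation of the disk system to straighten its tube part to $L_+\times[\epsilon,1-\eta]$ via the isotopy carrying $L_+$ through the intermediate level positions, and then the residual thin slab $\{1-\eta\le t\le1\}$ contains only small disjoint disk-caps over a copy of $L_+$, which are standardized by the isotopy extension theorem together with the uniqueness of a spanning disk for the unknot. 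Applying this to $F_1\cap\{t\ge\epsilon\}$ and $F_2\cap\{t\ge\epsilon\}$, which share the boundary $L_+$ and the number of maxima (both determined by $G$), we isotope $F_1$ without disturbing $\mathbb{R}^3\times[-\epsilon,\epsilon]$ so that $F_1$ and $F_2$ agree on $\{t\ge\epsilon\}$.

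\emph{Stage $2$ and conclusion.} The same argument applied below level $-\epsilon$ shows $L_-$ is trivial and that $F_i\cap\{t\le-\epsilon\}$ is the unique, up to isotopy fixing $\{t\ge-\epsilon\}$, system of monotone caps over $L_-$ with all minima on $\mathbb{R}^3_{-1}$; isotoping $F_1$ accordingly gives $F_1=F_2$. The composition of all the ambient isotopies used is an orientation-preserving auto-homeomorphism of $\mathbb{R}^4$ carrying the original $F_1$ to the original $F_2$. --- I expect the main obstacle to be precisely the uniqueness statement for cappings invoked in Stage $1$: that two systems of disjoint disks in $\{t\ge\epsilon\}$ with common boundary a trivial link and all maxima on one level are ambient isotopic rel boundary. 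Its proof requires careful control of the isotopy produced by the level foliation and an appeal to the uniqueness of a spanning disk for the unknot inside a ball, and one must be vigilant that every isotopy can be chosen to fix the already-matched portion of the surface.
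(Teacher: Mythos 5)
The paper itself does not prove this statement; it is quoted from Kawauchi--Shibuya--Suzuki \cite{KSS82}, so there is no in-paper proof to compare against. Your three-stage plan is in fact the standard argument from that reference: the slab around $\mathbb{R}^3_0$ is the canonical saddle cobordism read off from the marked diagram, and the portions above $\mathbb{R}^3_\epsilon$ and below $\mathbb{R}^3_{-\epsilon}$ are systems of one-critical-point caps over the trivial links $L_\pm$, so everything reduces to the uniqueness, rel the already-matched part, of a trivial disk system bounding a fixed trivial link.

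That uniqueness lemma is precisely where your write-up stops short, and your sketch of it does not yet work as stated. After you straighten the tube part to $L_+\times[\epsilon,1-\eta]$, the caps in the residual slab are bounded by a copy of $L_+$ itself, not by small round circles; so the claim that they are ``standardized'' there is the same uniqueness statement you set out to prove, only in a thinner slab. Likewise, ``uniqueness of a spanning disk for the unknot'' is a $3$-dimensional fact and does not by itself standardize a $4$-dimensional cap, nor does it control the mutual position of the several caps of a link. What is missing is the classical chain: (a) a properly embedded disk in the slab with a single maximum is ambient isotopic rel boundary to the push-in of an embedded spanning disk lying in one level; (b) two systems of disjoint spanning disks in $\mathbb{R}^3$ for the same trivial link are ambient isotopic (innermost-circle argument); (c) push-ins of isotopic disk systems are ambient isotopic rel boundary by an isotopy supported above the level $\epsilon$. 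With (a)--(c) supplied --- all classical, and this is exactly the content of the lemma in \cite{KSS82} --- your Stage 0, the triviality of $L_\pm$, and the symmetric treatment of the bottom caps are correct, and the composition of the ambient isotopies indeed yields the required equivalence.
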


For a marked graph diagram $D$, we denote by $L_+(D)$ and $L_-(D)$ the diagrams obtained from $D$ by smoothing every vertex as shown in Fig. \ref{MJ_103} for $+\epsilon$ and $-\epsilon$, respectively. We have the following characterization of marked graph diagrams corresponding to knotted surface.

\begin{theorem}[Kawauchi, Shibuya, and Suzuki \cite{KSS82}]

Let $D$ be a marked graph diagram. There exists a knotted surface $F$ in a hyperbolic splitting such that $D$ is associated with the cross-section $F\cap\mathbb{R}^3_0$ if and only if $L_+(D)$ and $L_-(D)$ are diagrams of trivial links in $\mathbb{R}^3$.
\end{theorem}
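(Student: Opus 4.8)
The plan is to prove the two implications of the equivalence separately: the forward direction by reading off the height function $t\colon\mathbb{R}^4\to\mathbb{R}$ on $F$ near its top and bottom, and the converse by an explicit construction of $F$ in four horizontal slabs of $\mathbb{R}^4$.

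\emph{Forward direction.} Suppose $D$ is associated with $F\cap\mathbb{R}^3_0$ for a knotted surface $F$ in a hyperbolic splitting. By the resolution rules of Figure~\ref{MJ_103}, for a small $\epsilon>0$ the cross-section $F\cap\mathbb{R}^3_\epsilon$ is a diagram of the link $L_+(D)$. Since in a hyperbolic splitting the only critical values of $t|_F$ are $-1,0,1$, the function $t|_F$ has no critical point with value in $(0,1)$, so all cross-sections $F\cap\mathbb{R}^3_s$ with $s\in(0,1)$ are ambient isotopic in $\mathbb{R}^3$; in particular $L_+(D)\simeq F\cap\mathbb{R}^3_{1-\delta}$ for small $\delta>0$. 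Now I pass the maxima (first perturbing $F$ near level $1$ so that the maxima acquire distinct heights near $1$; this changes neither the equivalence class of $F$ nor its zero-section). Each maximum is, in Morse coordinates, a standard cap, so immediately below its level the cross-section contains a small round circle that bounds a disk lying in a small ball disjoint from the rest of the cross-section — that is, a split unknotted component — while immediately above, that component has vanished and the rest of the cross-section is unchanged up to isotopy. Reading downward from the empty link $F\cap\mathbb{R}^3_{s}=\emptyset$ (for $s$ above all the maxima) we therefore obtain $L_+(D)$ by repeatedly adjoining split unknots, so $L_+(D)$ is a trivial link. The mirror-image argument applied below level $0$, descending through the minima, shows that $L_-(D)$ is trivial as well.

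\emph{Converse.} Assume $D$ is a marked graph diagram with $L_+(D)$ and $L_-(D)$ trivial links. First realize $D$ by an honestly embedded marked graph $G\subset\mathbb{R}^3_0$ (general position: use the crossing information to separate strands, and keep the markers). Then build $F$ slab by slab: (a) over $[-\epsilon,\epsilon]$ take a thin bicollar of $G$, modified near each $4$-valent vertex by the standard saddle in the direction its marker prescribes; for $\epsilon$ small this is a smooth embedded cobordism whose cross-section at level $0$ is $G$ and whose boundary cross-sections at levels $\pm\epsilon$ are $L_{\pm}(D)$. (b) Over $[\epsilon,1]$ use the hypothesis that $L_+(D)$ is trivial: realize an ambient isotopy of $\mathbb{R}^3$ taking $L_+(D)$ to a standard round unlink as the trace of annuli over the levels $[\epsilon,1)$, then cap each round circle by a standard round cap with a single maximum at level $1$; the union is a collection of disjoint disks with boundary $L_+(D)$. (c) Symmetrically over $[-1,-\epsilon]$, cap $L_-(D)$ by disjoint disks with minima at level $-1$. (d) Nothing outside $[-1,1]$. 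Glue these pieces along the circles $L_+(D)\subset\mathbb{R}^3_\epsilon$ and $L_-(D)\subset\mathbb{R}^3_{-\epsilon}$. The result is a closed surface (the boundaries match), it is embedded (each piece is embedded and the pieces lie in distinct slabs meeting only along the gluing circles), and $t|_F$ is Morse with maxima in $\mathbb{R}^3_1$, minima in $\mathbb{R}^3_{-1}$, saddles in $\mathbb{R}^3_0$ and no other critical points; thus $F$ is a hyperbolic splitting whose zero-section $F\cap\mathbb{R}^3_0=G$ has associated diagram $D$.

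\emph{Main obstacle.} The only genuinely non-formal point is in the forward direction: one has to see that ``$t|_F$ has only maxima above level $\epsilon$'' forces $L_+(D)$ to be \emph{trivial as a link in $\mathbb{R}^3$}, which is much stronger than the mere fact that $L_+(D)$ bounds disjoint disks in the four-dimensional collar $\mathbb{R}^3\times[\epsilon,\infty)$. The resolution is exactly the local-model observation above — a circle that dies at a maximum is a split unknot at the level just below — so that capping maxima one at a time is precisely the four-dimensional shadow of the operation ``delete a split unknotted component'' in $\mathbb{R}^3$. Everything else (lifting $D$ to an embedded marked graph, smoothness of the standard saddle and cap models, and checking that the glued surface is closed and embedded) is routine.
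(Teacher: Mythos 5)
Your argument is correct: the forward direction via the standard Morse-theoretic observation that descending past a maximum only adds a split unknotted component (so a link bounded by caps with only maxima is trivial), and the converse via the explicit slab-by-slab construction capping $L_{\pm}(D)$ with disjoint disks, is exactly the classical proof of this theorem. The paper itself does not prove the statement but quotes it from Kawauchi--Shibuya--Suzuki \cite{KSS82}, and your proposal reproduces that standard argument, with only routine smoothing/ordering details (distinct maximum levels, corners along the gluing circles) left implicit.
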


\subsection{Calculus on links with bands}

A \emph{band} on a link $L$ is an embedded $I\times I$ intersecting the link $L$ precisely in the subset $b(\partial I\times I)$. We refer to the first factor of this $I\times I$ as the \emph{length} (with its boundary, the \emph{ends}). A {\it banded link} $BL$ in $\mathbb R^3$ is a pair $(L, B)$ consisting of a link $L$ in $\mathbb R^3$ and a finite set $B=\{b_0, \dots, b_m\}$ of pairwise disjoint $n$ bands spanning $L$.

\begin{figure}[ht]
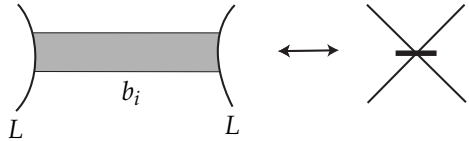

\begin{center}
\begin{lpic}[b(0.5cm)]{./figures/MJ_100(6cm)}
  \lbl[t]{30,8;$b_i$}
   \lbl[t]{0,-2;$L$}
   \lbl[t]{56,-1;$L$}
	\end{lpic}
	\caption{A band corresponding to a marked vertex.\label{MJ_100}}
\end{center}
\end{figure}

Let $BL =(L, B)$ be a banded link.  By an ambient isotopy of $\mathbb R^3$, we shorten the bands so that each band is contained in a small $2$-disk.  Replacing the neighborhood of each band to the neighborhood of a marked vertex as in Fig. \ref{MJ_100}, we obtain a marked graph, called a {\it marked graph associated with} $BL$. Conversely when a marked graph $G$ in $\mathbb R^3$ is given, by replacing each marked vertex with a band as in Fig. \ref{MJ_100}, we obtain a banded link $BL(G)$, called a {\it banded link associated with} $G$.  Moreover, when $G$ is described by a marked graph diagram $D$, then $BL(G)$ is also called a {\it banded link associated with} $D$, and denoted by $BL(D)$.

\begin{figure}[ht]
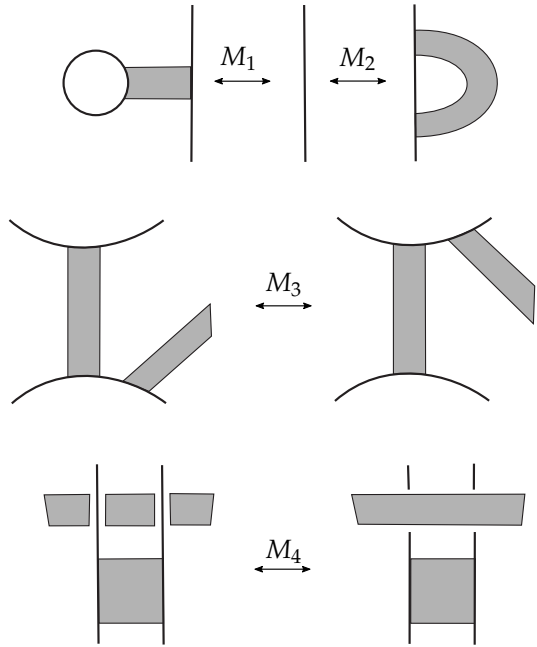

\begin{center}
\begin{lpic}[b(0.5cm)]{./figures/MJ_102(7cm)}
   \lbl[b]{56,140;$M_1$}
   \lbl[b]{85,140;$M_2$}
   \lbl[b]{67,85;$M_3$}
   \lbl[b]{67,20;$M_4$}
	\end{lpic}
	\caption{Allowed moves on a banded link.\label{MJ_102}}
\end{center}
\end{figure}

Let us also define the dual banded link $BL^*$ to $BL$ as a $BL(D^*)$ of a marked graph diagram $D^*$, which is obtained from a marked graph $D$ (associated with $BL$) by switching all of its markers to its second type (i.e. rotate marker by $90^{\circ}$). A \emph{hyperbolic transformation} of a link $L$ by band $b:I\times I\to \mathbb{R}^3$ is a link obtained by replacing $b(\partial I\times I)$ with $b(I\times \partial I)$. For a banded link $BL=(L,B)$ let as define $BL_-$ to be our starting link $L$ (but without attached bands), and $BL_+$ to be a link obtained from $L\cup B$ by a hyperbolic transformation (a surgery) on $L$ by every band from the set $B$.

The set of bands $B$ may be further partitioned into three sets, $B^U$ containing all fusion bands (i.e. decreasing number of components of $L$ in $BL_+$ after doing sequential hyperbolic transformations), $B^I$ containing all fission bands (i.e. increasing number of components) and $B^N$ containing all other bands.

\begin{theorem}[Swenton \cite{Swe01}, more details are in \cite{KeaKur08}]

Two knotted surfaces are equivalent if and only if, their banded links may be transformed one to another by an ambient isotopy in $\mathbb{R}^3$ and finite sequence of elementary local moves taken from the set $\{M_1, M_2, M_3, M_4\}$ presented in Fig. \ref{MJ_102}.
\end{theorem}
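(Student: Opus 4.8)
The plan is to prove the two implications separately. The implication ``related by the moves $\Rightarrow$ equivalent surfaces'' is the routine half; the implication ``equivalent surfaces $\Rightarrow$ related by the moves'' carries essentially all of the content, and I would attack it by a generic one-parameter family argument in the spirit of Cerf theory.

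\emph{The routine half.} By the characterization theorem above, an admissible marked graph diagram (equivalently, a banded link) determines a knotted surface in hyperbolic splitting up to equivalence, so it is enough to check that each of the allowed transformations lifts to an orientation-preserving ambient isotopy of $\mathbb{R}^4$. An ambient isotopy of $\mathbb{R}^3$ extends to $\mathbb{R}^3\times\mathbb{R}=\mathbb{R}^4$ fixing the $t$-coordinate and hence carries the associated surface to an equivalent one without disturbing the hyperbolic splitting. Each move $M_i$ is supported in a ball $B\subset\mathbb{R}^3$; inside $B\times\mathbb{R}\subset\mathbb{R}^4$ one draws the two surface pieces realizing the banded links before and after the move and writes down an explicit ambient isotopy between them --- the Reidemeister-type and planar moves of the marked graph come from isotoping the zero cross-section and sweep out an isotopy of the surface, the band-slide move corresponds to sliding one $1$-handle of $F$ over another, the band-swim move to pushing a $1$-handle through a sheet of $F$, and the band-creation move to introducing a cancelling pair of critical points of consecutive index. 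Each of these is a standard orientation-preserving ambient isotopy of $\mathbb{R}^4$.

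\emph{The main half.} Let $h\colon\mathbb{R}^4\to\mathbb{R}^4$ be an orientation-preserving auto-homeomorphism with $h(F_1)=F_2$. Since the orientation-preserving self-homeomorphisms of $\mathbb{R}^4$ form a path-connected group, choose an isotopy $h_s$, $s\in[0,1]$, from $h_0=\mathrm{id}$ to $h_1=h$; then $h_s(F_1)$ is a one-parameter family of embedded closed surfaces from $F_1$ to $F_2$, which after a preliminary perturbation I would take to be a \emph{smooth} family $G_s$ with $G_0=F_1$ and $G_1=F_2$. After a further small perturbation fixing the endpoints, make this family generic: for all but finitely many $s$ the function $t|_{G_s}$ is Morse with distinct critical values, which can be normalized by an additional ambient isotopy of $\mathbb{R}^4$ so that all maxima lie in $\mathbb{R}^3_1$, all minima in $\mathbb{R}^3_{-1}$ and all saddles in $\mathbb{R}^3_0$, giving a hyperbolic splitting whose zero cross-section projects to a marked graph diagram $D_s$ locally constant in $s$; and at each of the finitely many exceptional parameters exactly one codimension-one degeneration of one of the following kinds occurs: (i) two critical values coincide (two critical points meet at one height, at least one of them a saddle); (ii) a birth or death of a cancelling pair of critical points of consecutive index; (iii) the projected zero cross-section changes by a Reidemeister-type move, a planar isotopy, or a passage of a marker past a crossing or another marker. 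Admissibility of each $D_s$ --- that $L_\pm(D_s)$ are trivial links --- is automatic here, since $G_s$ is a genuine embedded closed surface in hyperbolic splitting, which is exactly the hypothesis of the characterization theorem above.

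It then remains to read off how the associated banded link $BL(D_s)$ changes across each exceptional parameter and to recognize the change as a member of the move set. Events of type (iii) change $BL(D_s)$ by an ambient isotopy of $\mathbb{R}^3$ or by a Reidemeister-type move from $\{M_1,\dots,M_4\}$; events of type (ii) change it by the band-creation/deletion move, adding or removing a small split unknotted component together with a trivial band and so altering the number of bands by one; and events of type (i) change it by a band slide or a band swim, those subcases in which the two interacting critical points can already be separated by the normalizing isotopy contributing nothing. Concatenating the finite list of moves collected in this way transforms $BL(D_0)=BL(F_1)$ into $BL(D_1)=BL(F_2)$, which proves the theorem. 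I expect the main obstacle to be twofold: first, the transversality argument showing the list (i)--(iii) is exhaustive once one restricts to families compatible with the normalized hyperbolic form and with a fixed projection to $\mathbb{R}^2$; and second, the bookkeeping inside the normalization step --- for instance, a maximum born between $\mathbb{R}^3_0$ and $\mathbb{R}^3_1$ must be dragged up to $\mathbb{R}^3_1$ past everything already lying above it, which is realized not by one move but by a controlled sequence of moves from $\{M_1,\dots,M_4\}$. This last point is precisely why ``more details'' are deferred to \cite{KeaKur08}.
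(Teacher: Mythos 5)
First, note that the paper itself offers no proof of this statement: it is imported verbatim from Swenton \cite{Swe01}, with the detailed verification deferred to Kearton--Kurlin \cite{KeaKur08}, so the only meaningful comparison is with those sources. Your outline is in fact the strategy they follow: the easy direction by realizing each move and each isotopy of $\mathbb{R}^3$ by an ambient isotopy of $\mathbb{R}^4$ compatible with the hyperbolic splitting, and the hard direction by connecting the two surfaces through a generic one-parameter family, renormalizing to the three levels $t=-1,0,1$, and reading off the finitely many codimension-one events as moves on the banded link. So the approach is the right one; the issue is that what you submit is a road map rather than a proof.

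The genuine gaps sit exactly where you yourself flag ``expected obstacles'', and they are not peripheral: they are the content of the theorem. (a) You assert, but do not prove, that the list (i)--(iii) of codimension-one events is exhaustive and that each event translates into a composition of $M_1$--$M_4$; in particular the renormalization step (a saddle or a birth/death point crossing the level of another critical point while being dragged to its standard level, a band being pushed past a maximum or minimum) is precisely the delicate case analysis on which Swenton's original account was terse and which \cite{KeaKur08} was written to supply --- saying ``events of type (i) change it by a band slide or a band swim'' is the claim to be established, not an observation. (b) The very first step also needs care with the paper's definition of equivalence: an orientation-preserving auto-homeomorphism of $\mathbb{R}^4$ does not immediately yield a smooth generic family $G_s$; path-connectedness of the orientation-preserving homeomorphism group of $\mathbb{R}^4$ rests on the stable homeomorphism theorem (Quinn in dimension $4$), and passing from a topological isotopy of embedded smooth surfaces to a smooth one before applying transversality is not a ``preliminary perturbation'' --- in the cited sources this is handled by working throughout in the smooth (or PL) category for the equivalence relation. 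As it stands, then, the proposal correctly identifies the scheme of proof but leaves unproved exactly the local analysis that makes the theorem true.
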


\begin{proposition}[Swenton \cite{Swe01}]

The connected sum of two connected surfaces is obtained by joining their banded links by a trivial band, as presented in Fig. \ref{MJ_101}. 
\end{proposition}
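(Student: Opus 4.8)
The plan is to argue on the marked graph diagram side and then identify which surface-knot the resulting diagram determines. Fix marked graph diagrams $D_1,D_2$ of $F_1,F_2$, realised in hyperbolic splittings sitting in two disjoint $3$-balls of $\mathbb{R}^3$ (thickened in the time direction), and let $D:=D_1\sqcup D_2\cup b$ be the diagram obtained by joining them with the trivial band $b$ of Fig.~\ref{MJ_101}. The first step is to check that $D$ is indeed the marked graph diagram of a surface-knot, i.e.\ that $L_-(D)$ and $L_+(D)$ are trivial links. For $L_-(D)$ this is immediate, since resolving the marker of $b$ in the $-\epsilon$ direction merely restores the two arcs of the underlying link, so $L_-(D)=L_-(D_1)\sqcup L_-(D_2)$, a split union of two trivial links. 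For $L_+(D)$, perform the surgery along every band with $b$ taken last (legitimate since the bands are pairwise disjoint): after the surgeries along $B_1$ and $B_2$ one has $L_+(D_1)\sqcup L_+(D_2)$, and the last surgery band-sums one component $\alpha$ of $L_+(D_1)$ to one component $\beta$ of $L_+(D_2)$ along $b$. Since $L_+(D_i)$ is trivial, $\alpha$ and $\beta$ are unknotted and unlinked from everything, and since $b$ is unknotted, untwisted, and meets the two balls only at its ends, that band-sum reabsorbs into a single split unknot; hence $L_+(D)$ is trivial, and $D$ determines a surface-knot $F_D$.

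The main step is to identify $F_D$ with $F_1\#F_2$. Build $F_D$ from $D$ in the standard way: below level $0$, the disks capping $L_-(D)=L_-(D_1)\sqcup L_-(D_2)$ down to the minima; around level $0$, the band cobordism carried by $B_1\sqcup B_2\sqcup\{b\}$; above level $0$, the disks capping $L_+(D)$ up to the maxima. I would run the band surgeries in the order $B_1$, $B_2$, $b$. The surgeries along $B_i$ build the middle cobordism of the hyperbolic splitting of $F_i$ inside the $i$-th ball; together with the lower caps over $L_-(D_i)$ and all the upper caps of $F_D$ lying over components of $L_+(D_i)$ other than $\alpha$ (resp.\ $\beta$), they reassemble exactly $F_i$ with a single top cap disk $\Delta_\alpha$ (resp.\ $\Delta_\beta$) removed. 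The final surgery, along the single trivial band $b$, contributes a pair of pants $\Pi$ merging $\alpha$ and $\beta$ into one component $\gamma$ (plus product annuli on the remaining components, already accounted for); capping $\gamma$ with its top disk $\Delta_\gamma$ in $F_D$ turns $\Pi\cup\Delta_\gamma$ into a tube (annulus) $A$ with $\partial A=\alpha\sqcup\beta$. Because $b$ is trivial and lies in the region separating the two balls, $A$ is an unknotted tube there, so, re-bracketing the pieces, $F_D=(F_1\setminus\Delta_\alpha)\cup A\cup(F_2\setminus\Delta_\beta)$ glued along $\alpha$ and $\beta$. This is a connected sum of $F_1$ and $F_2$ performed along $\Delta_\alpha$ and $\Delta_\beta$; since $F_1,F_2$ are connected the result is independent of those choices, so $F_D$ is equivalent to $F_1\#F_2$.

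The step I expect to be the real obstacle is the bookkeeping hidden in the phrase ``re-bracketing the pieces'': one must keep track of collars so that $F_1\setminus\Delta_\alpha$, $A$, and $F_2\setminus\Delta_\beta$ glue to a smooth closed surface, and---most importantly---one must verify that $A$ is the standard unknotted tube and not some knotted one. That is exactly where triviality of $b$ and its disjointness from $B_1\cup B_2$ enter: they force $\Pi$, hence $A$, to lie in a ball meeting the rest of $F_D$ only along $\alpha\sqcup\beta$, which is precisely what makes the construction a connected sum in the usual sense. I would finish by invoking the standard independence of the connected sum of connected surface-knots from the gluing data (or, to stay self-contained, by exhibiting an ambient isotopy of $\mathbb{R}^4$ dragging any one attaching disk to any other along $F_i$). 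Geometrically the whole argument just says that connect-summing $F_1$ and $F_2$ along small disks taken in their top cap regions, by a tube routed through the complement, produces---after making the tube Morse and sliding its unique new saddle to level $0$---exactly the banded link of Fig.~\ref{MJ_101}.
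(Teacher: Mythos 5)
The paper does not actually prove this proposition: it is quoted as a known result with attribution to Swenton \cite{Swe01}, so there is no in-paper argument to measure you against. Your blind proof is a reasonable self-contained substitute and is essentially the standard argument one would extract from the hyperbolic-splitting picture: check that adjoining the trivial band still yields trivial resolutions $L_\pm$, realise the resulting diagram as a surface built from minimal caps, the saddle cobordism of the bands, and maximal caps, and observe that the extra band together with the cap over the merged component is an unknotted annulus joining $F_1$ minus a disk to $F_2$ minus a disk, i.e.\ a connected sum; your closing remark (start from $F_1\sharp F_2$ tubed through the region between the two balls, make the tube Morse, slide its single saddle to level $0$) is in fact the slickest way to phrase it. Two points in your sketch deserve explicit support rather than the phrase ``re-bracketing the pieces'': (1) the claim that the caps of $F_D$ over the components of $L_+(D_i)$ other than $\alpha,\beta$ ``reassemble exactly $F_i$'' uses the uniqueness of systems of trivial disks bounding a trivial link in a half-space (the Horibe--Yanagawa lemma underlying the Kawauchi--Shibuya--Suzuki theorem quoted in the paper), since the capping disks you get for $F_D$ need not literally coincide with those chosen for $F_i$; and (2) the independence of the result from which components the trivial band joins, and where, is exactly the well-definedness of connected sum of connected surface-knots (isotope the attaching disk along a path in the surface), which you correctly flag but should cite or prove rather than merely invoke. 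With those two standard facts referenced, your argument is complete and, unlike the paper, actually proves the statement rather than citing it.
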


\begin{figure}[ht]
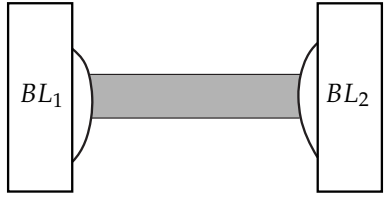

\begin{center}
\begin{lpic}[b(0.5cm)]{./figures/MJ_101(5cm)}
  \lbl[t]{8,25;$BL_1$}
  \lbl[t]{78,25;$BL_2$}
	\end{lpic}
	\caption{The connected sum of two surface-knots.\label{MJ_101}}
\end{center}
\end{figure}

\section{Properties of banded links}

An orientable embedded surface in $\mathbb{R}^4$ is \emph{unknotted} if it is equivalent to a surface embedded in $\mathbb{R}^3\times\{0\}\subset\mathbb{R}^4$. A banded link for an unknotted sphere is in Fig. \ref{MJ_106}(a), for an unknotted torus is in Fig. \ref{MJ_106}(d). An embedded projective plane $\mathbb{P}^2$ in $\mathbb{R}^4$ is \emph{unknotted} if it is equivalent to a surface, whose banded link looks like in Fig. \ref{MJ_106}(b) which is a \emph{positive} $\mathbb{P}^2_+$ or looks like in Fig. \ref{MJ_106}(c) which is a \emph{negative} $\mathbb{P}^2_-$. These are inequivalent surface and either of them is obtained by the reflection of the other. A banded link for unknotted Klein bottle is in Fig. \ref{MJ_106}(e).

\begin{figure}[ht]
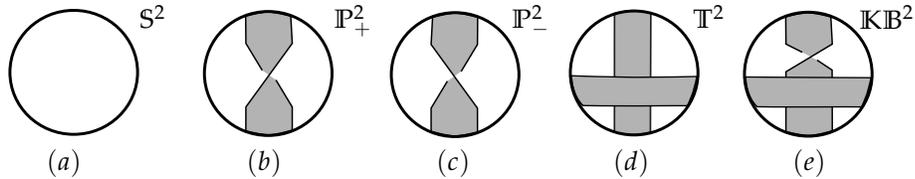

\begin{center}
\begin{lpic}[b(0.75cm)]{./figures/MJ_106(11.5cm)}
  \lbl[t]{26,24;$\mathbb{S}^2$}
  \lbl[t]{62,24;$\mathbb{P}^2_+$}
	\lbl[t]{94,24;$\mathbb{P}^2_-$}
  \lbl[t]{126,24;$\mathbb{T}^2$}
	\lbl[t]{158,24;$\mathbb{KB}^2$}
  \lbl[t]{10,-2;$(a)$}
  \lbl[t]{46,-2;$(b)$}
	\lbl[t]{80,-2;$(c)$}
  \lbl[t]{112,-2;$(d)$}
	\lbl[t]{144,-2;$(e)$}
	\end{lpic}
\caption{Examples of the unknotted surfaces.\label{MJ_106}}
\end{center}
\end{figure}

A non-orientable embedded surface in $\mathbb{R}^4$ is \emph{unknotted} if it is equivalent to some finite connected sum of unknotted projective planes. If a non-orientable surface $F$ is unknotted then $\pi_1\left(\mathbb{R}^4\backslash F\right)\approx\mathbb{Z}_2$.

The normal Euler number of a non-orientable surface-knot $F$ can only take on the following values: $2g, 2g-4,\ldots, 4-2g, -2g$, where $g$ is the number of $\mathbb{P}^2$ summands of $F$. This was conjectured by H. Whitney and proved by W.S. Massey, later S. Kamada gave another proof of this theorem in \cite{Kam89}.

\begin{proposition}\label{stw4}
The connected sum of a surface-knot with the  projective plane is represented by banded links as in Fig. \ref{MJ_107}(c) (the case of the negative projective plane is similar).
\end{proposition}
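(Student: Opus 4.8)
The plan is to deduce this directly from Swenton's connected-sum description (the Proposition accompanying Fig.~\ref{MJ_101}) together with the move set $\{M_1,M_2,M_3,M_4\}$. Write $BL(F)=(L,B)$ for a banded link of the given surface-knot $F$, and recall that an unknotted positive projective plane is presented by the banded link of Fig.~\ref{MJ_106}(b), namely a single unknotted component $U$ carrying exactly one band $\beta$ attached to $U$ in the non-orientable (M\"obius) fashion with a half-twist of the appropriate sign; the ``negative'' case is the mirror picture of Fig.~\ref{MJ_106}(c), with the opposite half-twist. By Swenton's proposition, $F\#\mathbb{P}^2_+$ is then represented by the banded link obtained by placing $BL(F)$ and $(U,\{\beta\})$ side by side and joining $L$ to $U$ by one trivial band $c$; this is the picture I would draw as Fig.~\ref{MJ_107}(a), and it already ``is'' a banded link of the connected sum by the cited proposition.

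Next I would simplify this diagram. Using an ambient isotopy of $\mathbb{R}^3$ (permitted by Swenton's theorem) one shrinks the band $c$ and slides $U$ along it so that $U$ becomes a tiny round circle in a small $3$-ball meeting $L$ only in a short sub-arc, with the two ends of $\beta$ now sitting on $U$ next to where $c$ meets $L$ --- this is Fig.~\ref{MJ_107}(b). It then remains to check that the configuration ``small unknotted circle $U$, attached to the rest of the banded link by the single trivial band $c$ and otherwise carrying only the two ends of $\beta$'' can be replaced by ``the arc of $L$ where $c$ was attached, now carrying the two ends of $\beta$ in the corresponding local position,'' deleting $U$ and $c$. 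Concretely this is a local cancellation of the trivial band $c$ against the unknot $U$, and I would verify it by exhibiting a short explicit sequence of the moves $M_1$--$M_4$ performed inside the small ball (band slides bringing $\beta$ past $c$, followed by the move that removes a trivial circle--band pair); alternatively, since both the before and after diagrams are genuine marked graph diagrams, one can compare their resolutions $L_\pm$, observe that they differ only by moves of trivial links, and invoke the Kawauchi--Shibuya--Suzuki characterization together with Swenton's theorem. Either way the result is $BL(F)$ with one extra band attached to a single component of $L$ in the M\"obius fashion with the prescribed half-twist, which is exactly Fig.~\ref{MJ_107}(c).

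The parenthetical remark about the negative projective plane then needs no separate work: the only feature distinguishing the banded link of $\mathbb{P}^2_+$ from that of $\mathbb{P}^2_-$ is the sign of the half-twist on $\beta$, and this sign is untouched by every step above (the isotopy, the band slides, the cancellation move), so the mirrored argument gives the $\mathbb{P}^2_-$ version of Fig.~\ref{MJ_107}(c). I expect the main obstacle to be precisely the middle step: justifying the local cancellation of the connecting band against the unknot purely in terms of $M_1$--$M_4$, being careful that dragging the ends of $\beta$ off $U$ and onto $L$ does not secretly invoke a disallowed move and that the half-twist on $\beta$ is recorded faithfully throughout. Turning the informal ``shrink it away'' into a genuine finite sequence of the four allowed moves is the part that takes real care, and it is where the drawing conventions fixed in Fig.~\ref{MJ_102} and Fig.~\ref{MJ_107} will matter.
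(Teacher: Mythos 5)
Your main route coincides with the paper's proof: from Swenton's connected-sum picture (Fig.~\ref{MJ_107}(a)), two applications of the band-slide move $M_3$ carry the ends of the half-twisted band over the connecting band onto $L$, and the leftover trivial circle--band pair is then deleted by a single $M_1$ move, which is exactly the ``local cancellation'' you were worried about, so no further justification is required. One caution: your fallback argument via comparing the resolutions $L_\pm$ would not work, since the Kawauchi--Shibuya--Suzuki results only say that trivial resolutions guarantee a diagram represents \emph{some} surface (and that \emph{identical} diagrams give equivalent surfaces), not that two diagrams with trivial resolutions represent the same surface-knot --- but your primary argument does not need it.
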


\begin{proof}
By definitions of the banded link for the positive projective and the definition of the banded link for the connected sum of surface-knots, we have the resulting banded link as in Fig. \ref{MJ_107}(a). Using two times the move $M_3$ (resulting in Fig. \ref{MJ_107}(b)), and the move $M_1$, we the obtain a banded link in Fig. \ref{MJ_107}(c).
\end{proof}

\begin{figure}[ht]
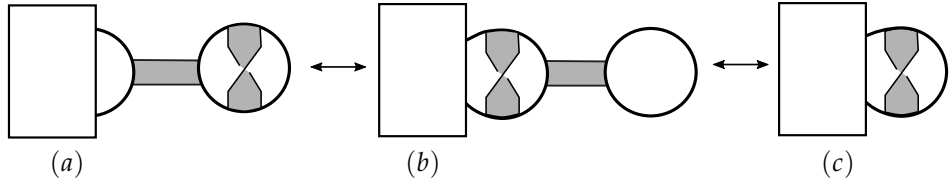

\begin{center}
\begin{lpic}[b(0.75cm)]{./figures/MJ_107(12.5cm)}
  \lbl[t]{10,-2;$(a)$}
	\lbl[t]{70,-2;$(b)$}
   \lbl[t]{140,-2;$(c)$}
	\end{lpic}
	\caption{The connected sum with projective plane.\label{MJ_107}}
\end{center}
\end{figure}

\begin{example}
Lets construct the marked graph diagram $D$ for the spin of the trefoil knot which corresponds to the closure of the surface singular braid word $a_2c_1^{-3}b_2c_1^{3}$, see \cite{Jab13} for the definition. Then we modify $BL(D)$ to obtain a banded link presented in Fig. \ref{MJ_104}. This surface-knot is known to be knotted, as an elementary computation of its fundamental group will show.
\end{example}

\begin{figure}[ht]
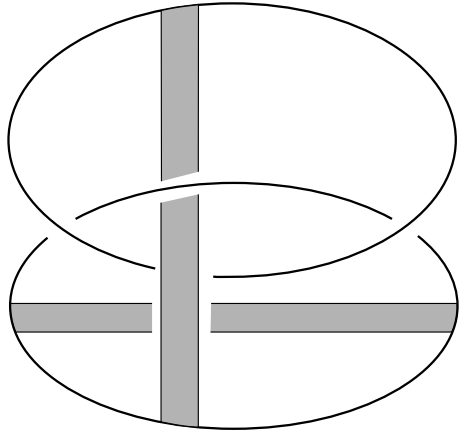

\begin{center}
\begin{lpic}[b(0.75cm)]{./figures/MJ_104(6cm)}
	\end{lpic}
	\caption{The spun of the trefoil knot.\label{MJ_104}}
\end{center}
\end{figure}

It can be easily verified that the Euler characteristic of a knotted surface $F$ associated to a banded link $BL=(L,B)$ satisfies the following formula. 

\begin{proposition}\label{stw1}

We have $\chi(F)=|BL_-|+|BL_+|-|B|$, where $|X|$ denotes the number of components in $X$.
\end{proposition}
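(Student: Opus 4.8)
The plan is to build the surface $F$ from the banded link $BL=(L,B)$ by tracking the Euler characteristic through the construction of the hyperbolic splitting, computing $\chi(F)$ as an alternating count of cells indexed by the Morse critical points. Recall that $F$ is obtained by taking $BL_-=L$ as the cross-section $\mathbb{R}^3_{-\epsilon}\cap F$ (capped below by minima), attaching the $|B|$ saddle bands to pass through $\mathbb{R}^3_0$ and reach $BL_+$ as the cross-section $\mathbb{R}^3_{+\epsilon}\cap F$, and then capping $BL_+$ above by maxima. Since $L$ and the surgered link $BL_+$ are both trivial links (by the characterization theorem of Kawauchi--Shibuya--Suzuki, $L_-(D)$ and $L_+(D)$ are trivial), each of the $|BL_-|$ components of $L$ is spanned by a disk of minima and each of the $|BL_+|$ components of $BL_+$ is spanned by a disk of maxima.

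The key step is to choose a CW (or Morse) decomposition of $F$ and count cells. I would use the standard handle/Morse count: $F$ is assembled from $|BL_-|$ 0-handles (one per minimum, i.e. per component of $L$), $|B|$ 1-handles (one per saddle band), and $|BL_+|$ 2-handles (one per maximum, i.e. per component of $BL_+$). Then $\chi(F)$ is the alternating sum of the numbers of handles of each index,
\[
\chi(F)=|BL_-|-|B|+|BL_+|,
\]
which is exactly the claimed formula. Equivalently, one can see this directly: start with $\sum$ (disjoint disks over $L$) contributing $\chi=|BL_-|$, each band is a $1$-handle attachment lowering $\chi$ by $1$, giving $|BL_-|-|B|$ for the "middle-level" surface-with-boundary, whose boundary is $BL_+$; capping each of the $|BL_+|$ boundary circles with a disk raises $\chi$ by $1$ each, yielding $|BL_-|+|BL_+|-|B|$.

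The main obstacle — really the only subtlety — is justifying that the capping disks below and above actually exist and are embedded, i.e. that both $L$ and $BL_+$ are unlinks; but this is guaranteed by the theorems quoted earlier in the excerpt (a banded link by definition comes from a marked graph diagram whose $\pm$-resolutions are trivial links), so no extra argument is needed. One should also note that the formula is manifestly invariant under the moves $M_1,\dots,M_4$ and ambient isotopy, as it must be since $\chi(F)$ is a topological invariant; a quick remark to this effect (each $M_i$ either leaves $|BL_-|$, $|BL_+|$, $|B|$ unchanged, or changes them in a way that preserves the combination) makes the proposition self-consistent, though it is not logically required once the handle decomposition argument above is in place.
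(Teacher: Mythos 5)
Your proof is correct: the paper itself offers no proof (the proposition is stated as "easily verified"), and your handle count --- one $0$-handle per component of $BL_-$ from the capping disks of minima, one $1$-handle per band, one $2$-handle per component of $BL_+$ from the maxima --- is exactly the intended verification, giving $\chi(F)=|BL_-|-|B|+|BL_+|$. The only cosmetic remark is that the existence of the capping disks is built into the construction of the surface associated with a banded link (equivalently, into the hyperbolic splitting, where the part of $F$ below the saddle level has only minima and is therefore a disjoint union of disks, one per component of $BL_-$), so your appeal to triviality of $L$ and $BL_+$ is a fine but not strictly necessary justification.
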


We say that a banded link $BL\subset \mathbb{R}^3$ is in \emph{flat form} if the link $L$ is a split link embedded in some real plane $\mathbb{R}^2\subset \mathbb{R}^3$. It is clear that every knotted surface has a banded link presentation that is in flat form because $L$ is always the classical unlink.

We can verify easily the following.

\begin{proposition}\label{stw2}

If $BL$ is a banded link for a surface-knot (i.e. connected) then we have that $$|BL_+|+|BL_-|=|B^U|+|B^I|+2.$$
\end{proposition}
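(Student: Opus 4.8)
The plan is to read a handle decomposition of $F$ off the banded link and then count the fusion and the fission bands separately, the connectedness of $F$ being the only real input. Recall that $F$ is reconstructed from $BL=(L,B)$ by capping the $|BL_-|$ unknotted components of $L$ with disks from below, attaching the bands of $B$ as saddles, and capping the $|BL_+|$ components of $BL_+$ with disks from above; equivalently $F$ carries a handle decomposition with $|BL_-|$ handles of index $0$, $|B|$ handles of index $1$ (the bands), and $|BL_+|$ handles of index $2$.

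First I would prove $|B^U|=|BL_-|-1$. Attach the bands one at a time, in an arbitrary order, starting from the $|BL_-|$ disjoint lower disks. Attaching a band means attaching a $1$-handle to the surface assembled so far, which either joins two of its connected components into one --- and then the band is, by definition, a fusion band --- or is glued inside a single component, leaving the number of components unchanged. We begin with $|BL_-|$ components and finish, after all the bands and then the $2$-handles (capping boundary circles, which cannot alter the number of components), with the single component $F$; hence exactly $|BL_-|-1$ of the bands are of the joining type, independently of the order chosen, and these bands span a tree on the set of lower disks. Thus $|B^U|=|BL_-|-1$.

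Now run the same argument reading the decomposition from the top downward: the $|BL_+|$ upper disks are the $0$-handles, the bands are the $1$-handles taken in the reverse order, and the lower disks are the $2$-handles; a band that joins two components of the surface assembled from above is precisely a fission band for $BL$, so the same counting gives $|B^I|=|BL_+|-1$. Adding the two identities,
\[
|BL_+|+|BL_-|=\bigl(|B^I|+1\bigr)+\bigl(|B^U|+1\bigr)=|B^U|+|B^I|+2 ,
\]
which is the assertion; in combination with Proposition~\ref{stw1} it forces $|B^N|=|B|-|B^U|-|B^I|=2-\chi(F)\ge 0$, as it must be.

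The step requiring care is the identification of ``fusion band'' with ``band merging two components of the partially assembled surface'' (and symmetrically for fission): one must check this is compatible with the definition through the sequential hyperbolic transformations of the link, and --- what comes to the same --- that once the $|BL_-|-1$ fusion bands are in place and the lower part is connected, no remaining band joins two distinct boundary circles of it, so that $B^U$ and $B^I$ are genuinely disjoint and the fission count is exactly $|BL_+|-1$ rather than merely at least that. Granting this bookkeeping, each of the two counts is just the elementary fact that $p$ merging moves fuse $p+1$ pieces into one.
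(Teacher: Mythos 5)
Your arithmetic skeleton (the two counts $|B^U|=|BL_-|-1$ and $|B^I|=|BL_+|-1$, added together) is surely the intended ``easy verification''---the paper gives no argument at all---but the step you explicitly grant at the end is not bookkeeping; it is the whole content, and for the definition as literally stated in the paper (a band is a fusion or fission band according to whether it decreases or increases the number of \emph{link} components at its turn in the sequence of hyperbolic transformations) the identification you rely on is false. Take the unknotted torus tilted so that the fourth coordinate has one minimum, two saddles and one maximum: its banded link (cf.\ Fig.~\ref{MJ_106}(d); by Proposition \ref{stw1} any two-band presentation of a torus must have $|BL_-|=|BL_+|=1$) is a single unknotted circle carrying two bands. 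Since there is only one bottom disk and only one top disk, no band ever merges two components of the partially assembled surface, so your identities demand $|B^U|=|B^I|=0$; but in either sequential order the first band splits the circle into two and the second rejoins them, so the link-level reading gives $|B^U|=|B^I|=1$. The second band decreases the number of link components while joining two boundary circles of an already connected partial surface---exactly the situation your closing paragraph hopes cannot occur. (Indeed, under that literal reading the proposition itself would assert $2=4$ for this banded link, so the statement is only true once $B^U$ and $B^I$ are understood at the surface level.)

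Hence your proof becomes a proof only after the meaning of $B^U,B^I$ is fixed as you implicitly use it: a fusion band joins two distinct components of the union of the minimal disks with the previously attached bands, and a fission band does the dual job read from above---the reading under which Proposition \ref{stw3} and its use in Theorem \ref{tw4} and Remark \ref{uwa1} are correct, and which matches the count of fusion/fission saddles in the normal forms of Kawauchi--Shibuya--Suzuki and Kamada. Even then, the disjointness you mention needs an actual argument, not a remark: with the surface-level labelling a single band can qualify both from below and from above (either band of the ``pillow'' sphere, a two-component unlink joined by two parallel flat bands with $|BL_-|=|BL_+|=2$, merges the two bottom disks if attached first from below and the two top disks if attached first from above), so to obtain a genuine partition $B=B^U\sqcup B^I\sqcup B^N$---which Proposition \ref{stw3} requires---one must choose the merging bands from below and from above to be disjoint sets, equivalently arrange all fusions below all fissions; that is essentially the normal-form statement. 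Granting those two repairs, your final addition $(|B^I|+1)+(|B^U|+1)=|B^U|+|B^I|+2$ is of course correct.
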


Combining Propositions \ref{stw1} and \ref{stw2}, then using definitions for fission and fusion bands, we have the following proposition which for the non-orientable case can be found in \cite{Kam89}.

\begin{proposition}\label{stw3}

If $F$ is a  surface-knot then $|B^N|=2-\chi(F)$.
\end{proposition}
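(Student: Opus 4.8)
The plan is to obtain this identity purely formally by chaining the two preceding propositions together with the defining decomposition of the band set. First I would record that, by the way $B^U$, $B^I$, $B^N$ were defined, the set $B$ is their disjoint union, so that $|B| = |B^U| + |B^I| + |B^N|$.

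Next I would invoke Proposition \ref{stw1}, which gives $\chi(F) = |BL_-| + |BL_+| - |B|$, and substitute the above expression for $|B|$, obtaining $\chi(F) = |BL_-| + |BL_+| - |B^U| - |B^I| - |B^N|$. Then, since $F$ is a surface-knot and hence connected, Proposition \ref{stw2} applies and yields $|BL_+| + |BL_-| = |B^U| + |B^I| + 2$. Substituting this into the previous line makes the $|B^U|$ and $|B^I|$ terms cancel, leaving $\chi(F) = 2 - |B^N|$, i.e. $|B^N| = 2 - \chi(F)$, as claimed.

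There is essentially no obstacle in this argument itself: the statement is a one-line consequence of Propositions \ref{stw1} and \ref{stw2}, and the genuine content — counting the components of the links before and after performing the band surgeries, and the Euler-characteristic bookkeeping for the handle decomposition of the traced-out surface — is already carried by those propositions. The only point requiring any care is that the connectedness hypothesis on $F$ enters exactly through the use of Proposition \ref{stw2} (whose constant $2$ reflects a single $0$-handle and a single top handle); for a disconnected knotted surface that clean identity would have to be modified. For the non-orientable case one may alternatively appeal to \cite{Kam89}, but the computation above is uniform in orientability.
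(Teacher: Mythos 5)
Your proposal is correct and matches the paper's argument: the paper also obtains this proposition by combining Proposition \ref{stw1} with Proposition \ref{stw2} and the partition $B=B^U\cup B^I\cup B^N$, exactly the cancellation you carry out. Your remark that connectedness of $F$ enters only through Proposition \ref{stw2} is consistent with the paper's treatment.
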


\begin{definition}
The \emph{band number} of a surface-knot $F$, denoted by $bn(F)$, is the minimum number of bands in a banded link among all of banded links for $F$.
\end{definition}

It is easy to see that if $bn(F)=0$ then $F$ is the standard unknotted $2$-sphere $\mathbb{S}^2$. We will investigate this notion further.

\begin{theorem}\label{tw2}
If a surface-knot $F$ satisfies $bn(F)=1$ then $F$ is the unknotted projective plane $\mathbb{P}^2_+$ or $\mathbb{P}^2_-$.

\end{theorem}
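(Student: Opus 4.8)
The plan is to first pin down the homeomorphism type of $F$ from the arithmetic of Propositions \ref{stw1}--\ref{stw3}, and then to extract a normal form from the extreme simplicity of a banded link with a single band. Write $BL=(L,B)$ with $B=\{b\}$ for a banded link presenting $F$. Since $|B|=1$, the partition $B=B^U\cup B^I\cup B^N$ puts $b$ in exactly one of the three classes, so $|B^N|\in\{0,1\}$. By Proposition \ref{stw3} this gives $\chi(F)=2-|B^N|\in\{1,2\}$; as $F$ is a closed connected surface, $\chi(F)=2$ forces $F\cong\mathbb{S}^2$ (and then $b$ is a fusion or a fission band), while $\chi(F)=1$ forces $F\cong\mathbb{P}^2$ (and then $b\in B^N$). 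In the latter case Proposition \ref{stw1} together with $|B^N|=1$ gives in addition $|BL_-|=|BL_+|=1$.

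Next I would pass to the marked graph diagram $D$ associated with $BL$. It has exactly one $4$-valent vertex and no loop without a vertex, since such a loop would, after being capped below and above, split off a closed $\mathbb{S}^2$ summand and contradict the connectivity of $F$. Hence the underlying graph of $D$ is a single vertex with two loops attached, and up to planar isotopy there are only two ways the four ends of these loops can be cyclically arranged at the vertex: the ``parallel'' arrangement, for which one smoothing of $D$ is a one-component diagram and the other a two-component diagram (this is the case $\chi(F)=2$), and the ``interleaved'' arrangement, for which both smoothings are one-component diagrams (this is the case $\chi(F)=1$). Since $D$ is the marked graph diagram of a knotted surface, both $L_+(D)$ and $L_-(D)$ are trivial links. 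Now shrink the band $b$ into a small ball by an ambient isotopy, so that $BL$ becomes a trivial link together with one short band, and use ambient isotopy of $\mathbb{R}^3$ and the moves $M_1,\dots,M_4$ of Fig.\ \ref{MJ_102} to untangle the picture against the triviality of both smoothings. In the parallel case $D$ reduces to the banded link of Fig.\ \ref{MJ_106}(a); in the interleaved case the triviality of $L_+(D)$ forces the band to carry exactly one half-twist (three or more half-twists would give a torus-knotted smoothing), so the only remaining datum is the sign of that half-twist, and $D$ reduces to Fig.\ \ref{MJ_106}(b) or Fig.\ \ref{MJ_106}(c).

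To conclude: if $b$ were a fusion or a fission band, the previous step would identify $F$ with the unknotted $\mathbb{S}^2$, which has band number $0$, contradicting $bn(F)=1$; hence $b\in B^N$, and then the banded link of $F$ is, up to the allowed moves, that of $\mathbb{P}^2_+$ or of $\mathbb{P}^2_-$, so by Swenton's theorem $F$ is the corresponding unknotted projective plane, and the two are inequivalent.

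The crux is the untangling asserted in the second step. Although $L$ and both of its smoothings are genuinely trivial links and the band has been made short, the two loops may still wind about one another and about themselves in the projection, so one must verify that \emph{every} one-vertex marked graph diagram both of whose smoothings are trivial links is carried to one of the three diagrams of Fig.\ \ref{MJ_106}(a),(b),(c) by ambient isotopy and the moves $M_1,\dots,M_4$ alone, that is, without ever introducing a second band; equivalently, that the short band can be slid off any strand of the trivial link it happens to be threaded through while keeping the band count equal to one.
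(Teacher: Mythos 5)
Your Euler-characteristic bookkeeping at the start is fine and matches the spirit of the paper's use of Propositions \ref{stw2}--\ref{stw3}: with one band, $F$ is homeomorphic to $\mathbb{S}^2$ (fusion/fission band) or to $\mathbb{P}^2$ ($b\in B^N$). But the step you yourself flag as ``the crux'' is a genuine gap, and it is exactly where the entire difficulty of the theorem lives. Showing that a one-vertex marked graph diagram whose two resolutions are trivial links can be untangled to Fig.\ \ref{MJ_106}(a), (b) or (c) is equivalent to showing that a $2$-sphere in $\mathbb{R}^4$ with four critical points, respectively a projective plane with three critical points, is standard. These are not facts one can obtain by sliding a shortened band around; they are the main theorems of Scharlemann \cite{Sch85} and of Bleiler--Scharlemann \cite{BleSch88}, and the paper's proof consists precisely of invoking them: in the fusion case, $BL_+$ being the unknot forces the band to be trivial by \cite{Sch85} (so $M_1$ deletes it, contradicting $bn(F)=1$); the fission case is reduced to the fusion case by passing to the dual banded link $BL^*$; and in the $B^N$ case, \cite{BleSch88} forces the band to be the trivial half-twisted band, giving $\mathbb{P}^2_\pm$. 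Your proposal neither proves these statements nor cites them, so as written it assumes the conclusion.

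Two further points. Your remark that in the interleaved case ``three or more half-twists would give a torus-knotted smoothing'' only applies when the band is already standardly embedded (an unknotted, flatly embedded core with some twists); in general the band may wind through the unknotted circle in a complicated way while both resolutions remain trivial, and ruling out such configurations is again the content of \cite{BleSch88}, so the argument is circular there. Also, demanding a reduction by $M_1$--$M_4$ ``without ever introducing a second band'' is an extra monotonicity requirement that Swenton's theorem does not provide; even granting the topological standardness results, a purely move-theoretic reduction with bounded band number would need its own justification. The correct repair is simply to argue as the paper does: split into the three cases fusion, fission, $B^N$, and quote \cite{Sch85} (plus the duality trick) and \cite{BleSch88} respectively.
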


\begin{proof}
Let $BL=(L,B)$ be a banded link for $F$, denote this one band by $b_1$. Because $F$ is connected, by Proposition \ref{stw2} we have that $|BL_-|\in\{1,2\}$. There are now three cases to consider.

\emph{Case 1:} the band $b_1$ is a fusion band. 
Then by Proposition \ref{stw2} we have that $|L|=|BL_-|=2$. The banded link $BL$ in a flat form is a split trivial link with attached band $b_1$. Because $BL_+$ is the unknot, then by \cite{Sch85} we have that the band $b_1$ is the trivial band, so we can delete it by the move $M_1$. That contradicts $bn(F)=1$.

\emph{Case 2:} the band $b_1$ is a fission band.
Then by Proposition \ref{stw2} we have that $|L|=1$. Take a dual banded link $BL^*$, the dual band $b_1^*$ to $b_1$ satisfy Case 1 above. We get that $F$ is a mirror reflection of the unknotted $S^2$, yielding $F$ to also be the unknotted $S^2$. That contradicts $bn(F)=1$.

\emph{Case 3:} the band $b_1$ is neither fusion nor fission band (i.e. $b_1\in B^N$ ).
Then by Proposition \ref{stw2} we have that $|L|=1$. The banded link $BL$ is the trivial knot with attached band $b_1$. Because $BL_+$ is the unknot, then by \cite{BleSch88} we have that $b_1$ is the trivial half-twisted band, so it is the unknotted projective plane $\mathbb{P}^2_+$ or $\mathbb{P}^2_-$.

\end{proof}

\begin{theorem}\label{tw3}
Let $F$ be the $n$-twist-spun ($n\not=\pm 1$) of nontrivial $2$-bridge classical knot, then $bn(F)=2$. 
\end{theorem}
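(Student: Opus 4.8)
The plan is to prove the two inequalities $bn(F)\ge 2$ and $bn(F)\le 2$ separately, where $F=\tau^{n}(K)$, $K$ a nontrivial $2$-bridge knot and $n\ne\pm1$.

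\textbf{Lower bound.} First I would recall Zeeman's classical theorem: the $n$-twist-spin of a knot is unknotted exactly when $n=\pm1$ or the knot is trivial, so our $F$ is genuinely knotted (for $n=0$ its group is $\pi_{1}(S^{3}\setminus K)\ne 1$; for $|n|\ge 2$ one uses Zeeman's fibration of $\tau^{n}(K)$ together with the fact that the $n$-fold cyclic branched cover of a nontrivial knot is not $S^{3}$, via the Smith conjecture). Since $F$ is an \emph{orientable} surface-knot --- a $2$-sphere --- it is neither the unknotted $\mathbb{S}^{2}$ (which, by the remark following the definition of $bn$, is forced when $bn(F)=0$) nor an unknotted $\mathbb{P}^{2}_{\pm}$ (to which Theorem~\ref{tw2} would force it when $bn(F)=1$). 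Hence $bn(F)\ge 2$. I would also record the arithmetic for later use: $\chi(F)=2$, so $B^{N}=\varnothing$ by Proposition~\ref{stw3}, and then Propositions~\ref{stw1}--\ref{stw2} together with the move $M_1$ show that a two-band banded link of a $2$-knot with $B^{N}=\varnothing$ cannot have $|BL_{-}|=|BL_{+}|=2$; so the economical diagram we are after must have $(|BL_-|,|BL_+|)\in\{(1,3),(3,1)\}$.

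\textbf{Upper bound.} Here I must exhibit a single banded link for $F$ with exactly two bands. I would start from a $2$-bridge (plat) presentation of $K$ --- two cups, a $4$-braid $\beta$, two caps --- and write down the hyperbolic splitting of $\tau^{n}(K)$ adapted to it: perform the $2\pi n$ twisting ``along the bridge sphere'', so that at the saddle level the cross-section still carries the whole braid $\beta$, the two saddle points sit at the two bridge arcs, and the $n$ twists are recorded by local rotations of the two markers (equivalently by $n$ extra full twists inserted near the two vertices). This produces a marked graph diagram $D$ with exactly two vertices. The case $K=T(2,2k+1)$, where $\beta=\sigma_{1}^{2k+1}$, is the long flat form drawn explicitly in the last section of the paper; the general $2$-bridge case is the identical construction with an arbitrary $4$-braid in place of $\sigma_{1}^{2k+1}$. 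To finish I would check (i) that $L_{+}(D)$ and $L_{-}(D)$ are trivial links --- which reduces to the statement that the two plat closures of a trivial $2$-string tangle are trivial links, and this is exactly the place where $2$-bridgeness of $K$ is used --- so that $D$ legitimately represents a closed surface by the Kawauchi--Shibuya--Suzuki criterion; and (ii) that the surface-knot it represents is $\tau^{n}(K)$, verified either by computing its group and matching Zeeman's presentation of $\pi_{1}(\mathbb{R}^{4}\setminus\tau^{n}(K))$, or by reading the twist-spin motion picture directly off $D$. Together with the lower bound this yields $bn(F)=2$.

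\textbf{Main obstacle.} The delicate point is step (ii): certifying that the compact two-band diagram one writes down really is $\tau^{n}(K)$ and not some other $2$-knot with $\chi=2$, since the bookkeeping of the $n$ twists through the plat closure is subtle --- in particular, for $n=\pm1$ the construction must (and does) degenerate so that the two bands cancel, consistent with $\tau^{\pm1}(K)$ being unknotted, so one cannot be cavalier about the twist count. Rather than guessing $D$ and verifying afterwards, the robust route is to begin from a standard motion picture of $\tau^{n}(K)$ and compress its saddle level down to two saddles using the $2$-bridge structure of $K$, tracking the handle cancellations as one goes. Everything else --- the Euler-characteristic bookkeeping and the triviality of the relevant plat closures --- is routine.
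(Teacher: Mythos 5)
Your proposal is correct and follows essentially the same route as the paper: the lower bound comes from the fact (Zeeman/Cochran \cite{Coc83}) that twist-spins are unknotted only for $n=\pm1$ or trivial $K$, combined with Theorem \ref{tw2} and the $bn=0$ remark, and the upper bound from an explicit two-saddle marked graph diagram built out of the $2$-bridge plat structure with $n$ twists. The identification step you single out as the ``main obstacle'' is exactly what the paper disposes of by citation: in \cite{Jab13} the surface singular braid word $a_2Rb_2R^{-1}\Delta_3^{2n}$ is shown to present the $n$-twist-spin, so the two-marker diagram is certified there rather than re-verified; your side remark excluding $|BL_-|=|BL_+|=2$ is unnecessary (and not obviously true), but it plays no role in the argument.
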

\begin{proof}
Only for $n=\pm 1$ or trivial knot $K$ the $n$-twist-spun of knot $K$ is unknotted surface-knot (see \cite{Coc83}), so by Theorem \ref{tw2} we have that $bn(F)\geq 2$. To show that $bn(F)=2$ it is sufficient to construct a $BL$ for $F$ with two bands. We can do this directly by a marked graph diagram $D$ for plat closure of any $3$-strand braid $R$, which corresponds to the closure of the surface singular braid word $a_2Rb_2R^{-1}\Delta_3^{2n}$ (see \cite{Jab13}). The diagram $D$ has two markers corresponding to elements $a_2$ and $b_2$, so $BL(D)$ has two bands.
\end{proof}

\begin{theorem}\label{tw4}
Let $n=0,1,2,\ldots$  then there exists a surface-knot $F$ such that $bn(F)=n$. 
\end{theorem}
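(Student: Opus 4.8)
The plan is to realize every value $n\ge 0$ by one very explicit family: let $F_n$ denote the connected sum of $n$ copies of the positive unknotted projective plane $\mathbb{P}^2_+$, with the convention $F_0=\mathbb{S}^2$. Since $\mathbb{P}^2_+$ is connected and a connected sum of connected surfaces is connected, each $F_n$ is a surface-knot, so $bn(F_n)$ is defined. I claim that $bn(F_n)=n$, which proves the theorem.

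For the upper bound $bn(F_n)\le n$ I would induct on $n$. The base case $bn(F_0)=bn(\mathbb{S}^2)=0$ is the remark preceding Theorem~\ref{tw2}. For the inductive step, write $F_n=F_{n-1}\#\mathbb{P}^2_+$ and invoke Proposition~\ref{stw4}: starting from a banded link for $F_{n-1}$ realizing $bn(F_{n-1})$ bands, forming the connected sum with $\mathbb{P}^2_+$ and then performing the two $M_3$ moves and the single $M_1$ move used in the proof of that proposition yields a banded link for $F_n$ with exactly one additional band. Hence $bn(F_n)\le bn(F_{n-1})+1\le (n-1)+1=n$. The reason for appealing to Proposition~\ref{stw4} rather than the naive connected-sum picture of Figure~\ref{MJ_101} is precisely that the latter only gives $bn(F_{n-1})+2$, which is too weak.

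For the lower bound $bn(F_n)\ge n$ I would use the Euler-characteristic bookkeeping already set up. Combining Propositions~\ref{stw1} and \ref{stw2} gives $\chi(F)=2-|B^N|$ for any banded link $BL=(L,B)$ of a surface-knot $F$, i.e.\ $|B^N|=2-\chi(F)$, which is Proposition~\ref{stw3}; since $|B|=|B^U|+|B^I|+|B^N|\ge |B^N|$, every banded link of $F$ has at least $2-\chi(F)$ bands. Now $\chi(\mathbb{P}^2)=1$ and $\chi(A\#B)=\chi(A)+\chi(B)-2$, so $\chi(F_n)=2-n$, and therefore $bn(F_n)\ge 2-\chi(F_n)=n$. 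Together with the previous paragraph this gives $bn(F_n)=n$.

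The Euler-characteristic computation is routine; the only step needing care is the sharp upper bound, namely that attaching a projective-plane summand costs exactly one extra band rather than two, and this is exactly the content of Proposition~\ref{stw4}, so I do not expect a serious obstacle. If one wished to avoid citing Proposition~\ref{stw4}, an alternative would be to exhibit directly a flat-form banded link for $F_n$ consisting of a single unknotted circle with $n$ parallel, trivially half-twisted bands, and to check via $L_+$ and $L_-$ that it represents $\#^n\mathbb{P}^2_+$; I would nonetheless keep the inductive argument above as the cleaner route.
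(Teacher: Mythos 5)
Your proof is correct and follows essentially the same route as the paper: the family $F_n=\sharp^n\mathbb{P}^2$, the upper bound $bn(F_n)\le n$ via Proposition~\ref{stw4} (one extra band per projective-plane summand), and the lower bound via Proposition~\ref{stw3}, $bn(F_n)\ge 2-\chi(F_n)=n$. The only cosmetic difference is that you treat $n=0,1,2$ uniformly within the same family (using the Klein bottle for $n=2$), whereas the paper handles those small cases by citing its earlier discussion and runs the connected-sum argument only for $n>2$.
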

\begin{proof}
Cases for $n=0,1,2$ have already been discussed above. For $n>2$ take $F_n$ as a connected sum of $n$ copies of the unknotted projective plane (their signs may vary). From Proposition \ref{stw4}, the surface-knot $F_n$ has a banded link with $n$ bands, and it cannot have less because from Proposition \ref{stw3} we have that $bn(F_n)\geq |B^N|=2-(2-n)=n$.
\end{proof}

\begin{remark}\label{uwa1}
Similarly to the proof of Theorem \ref{tw4}, taking $T_n$ as an orientable unknotted surface-knots of genus $n$ for any $n=0,1,2,\ldots$, we can show that  $bn(T_n)=2n$ .
\end{remark}

Hence we find the following question interesting.

\begin{question}\label{pyt1}
Is the number $bn(F)$ for any orientable surface-knot $F$ even? 
\end{question}

\begin{theorem}\label{tw5}

For any surface-knot $F$ with a banded link $BL=(L,B)$ with $L=\{a_1, \ldots, a_k\}$, there exists a banded link $BL_l=(L_l, B_l)$ for $F$, in flat form, with $L_l=\{c_1, \ldots, c_n\}$ and $B_l=\{b_1, \ldots, b_m\}$ and disks $d_i$ spanning $c_i$, for $i=1, \ldots, n$ ; $m\geq n-1$ ; $k\geq n$ and $n=1, 2, \ldots$ satisfying the following:
\begin{enumerate}[label={(\roman*)}]

\item The band $b_i$ has its ends on $c_i$ and $c_{i+1}$ for $i=1,\dots, n-1$.
\item All bands $b_n,\ldots, b_m$ has both its ends on $c_n$.
\item If $n\geq 2$ then $int(d_1)\cap B_l\not=\emptyset$.

\end{enumerate}
\end{theorem}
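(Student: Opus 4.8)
The plan is to start from an arbitrary banded link $BL=(L,B)$ for $F$, pass to flat form, and then reorganise the bands by band slides until the underlying \emph{band graph} is a path with all extra edges turned into loops at one end. Recall that $L=BL_-$ is a trivial link, so an ambient isotopy of $\mathbb{R}^3$ puts $BL$ in flat form with $L=\{a_1,\dots,a_k\}$ a row of disjoint round circles in a plane. Form the band graph $\Gamma$ with a vertex for each component of $L$ and an edge for each band joining the vertices of the components carrying its ends (a loop if both ends lie on one component). I would first record that $F$ connected forces $\Gamma$ connected: $F$ deformation retracts onto a complex built from $L\cup B$ by attaching $2$-cells along curves each meeting only one component of $L\cup B$ (the minima along the components of $L$, the maxima along those of $BL_+$), so $\pi_0(F)\cong\pi_0(L\cup B)\cong\pi_0(\Gamma)$. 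If $k=1$ the statement holds with $n=1$ (conditions (i) and (iii) vacuous, (ii) automatic); so assume $k\ge 2$, whence $\Gamma$ has no isolated vertex.

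Next I would fix a spanning tree $T\subseteq\Gamma$ (it has $k-1$ edges) and comb it into a path: as long as some vertex $v$ has $T$-degree $\ge 3$, pick two $T$-edges $e=vu$ and $e'=vu'$ at $v$, slide the end of $e'$ lying on the component of $v$ over the band $e$ so that $e'$ comes to join $u$ and $u'$; this replaces $T$ by another spanning tree in which $v$ has smaller degree, and after finitely many steps $T$ is a path. Relabelling the components along this path as $c_1,\dots,c_n$ (so $n=k$) and the tree bands as $b_1,\dots,b_{n-1}$ with $b_i$ joining $c_i$ and $c_{i+1}$ gives (i). For each of the remaining $m-(n-1)$ bands (here $m=|B|\ge n-1$), which joins some $c_i$ and $c_j$ with $i\le j$, I would slide its end on $c_i$ over $b_i,\dots,b_{n-1}$ onto $c_n$ and its end on $c_j$ over $b_j,\dots,b_{n-1}$ onto $c_n$, turning it into a loop at $c_n$; relabelling these loops $b_n,\dots,b_m$ gives (ii). None of these band slides moves the circles of $L$, so flatness is preserved; take $d_i$ to be the round disk bounded by $c_i$ in the plane. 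Finally, if $n\ge 2$ and no band yet meets $int(d_1)$, I would apply an ambient isotopy supported near $b_1$ dragging a short sub-arc of $b_1$ issuing from its end on $c_1$ up off the plane, across $int(d_1)$, and back over $c_1$, which makes $int(d_1)\cap B_l\neq\emptyset$ and gives (iii).

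The main thing to get right is the legitimacy of the band slides: that sliding the end of a band along the link, and over another band, is realized by an ambient isotopy of $\mathbb{R}^3$ together with finitely many of the moves $M_1,\dots,M_4$ of Figure \ref{MJ_102} (equivalently, a band slide does not change the surface type, so this follows from Swenton's theorem), and that neither these slides nor the final local isotopy destroys flatness or the pairwise disjointness of the bands. The combinatorial ingredients — that a connected graph on $\ge 2$ vertices has no isolated vertex, and that a tree can be combed into a path by the above edge moves — are elementary, so once the band slides are in hand the rest is bookkeeping.
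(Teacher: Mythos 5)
On parts (i) and (ii) your route is essentially the paper's: form the band multi-graph, use connectivity of $F$ to get connectivity of the graph, choose a spanning tree, and rearrange the bands by slides (the move $M_3$) until the tree bands form a path $c_1,\dots,c_n$ and every remaining band is a loop on $c_n$. The only difference is cosmetic: the paper first slides everything to a chosen root, producing a star, and then unfolds the star into a path, while you comb branch vertices of the tree directly. One point in your combing step is not justified as written: from ``the $T$-degree of $v$ decreases'' you conclude termination, but the degree of $u$ increases by the same slide, and careless choices can cycle (slide $e'=vu'$ over $e=vu$, then, $u$ now having degree $\ge 3$, slide $e'$ back over the same edge). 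You need either a definite strategy (for instance the paper's root-and-star procedure) or an actual monovariant; this is easy to supply, but it is missing.

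The genuine divergence is condition (iii), and there your argument misses the point of the statement. You satisfy $int(d_1)\cap B_l\neq\emptyset$ by isotoping a small finger of $b_1$ over $c_1$ and through the spanning disk $d_1$. This meets the literal wording (flat form is preserved since $L$ is not moved), but it makes (iii) vacuous: the clause is a normalization/reducedness condition, meant to record that the leftmost circle cannot be split off from the rest of the banded link, and an artificially created intersection certifies nothing of the sort. The paper's proof goes in the opposite direction: if $int(d_1)\cap B_l=\emptyset$, then $c_1$ together with the fusion band $b_1$ is, as in Case 1 of the proof of Theorem \ref{tw2}, a trivial band by Scharlemann's theorem \cite{Sch85}, so the pair is deleted by the move $M_1$, the indices are shifted down, and one iterates until either (iii) holds or $n=1$. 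That cancellation argument is the mathematical content of (iii), and it is also the reason the statement allows $n<k$ (in your construction one always has $n=k$). So, while a maximally literal reading lets your finger move through $d_1$ close the case, a proof of the theorem as intended must include the cancellation step rather than manufacture the intersection.
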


We call a representation $BL_l$ in the above proposition a \emph{long flat form} of $F$, the schematic picture of this form is in Fig. \ref{MJ_105}. Notice that $B_l^U=\{b_1, \ldots, b_{n-1}\}$, and we can sometimes minimize number the link components considering long flat form for the dual banded link $BL^*$.

\begin{figure}[ht]
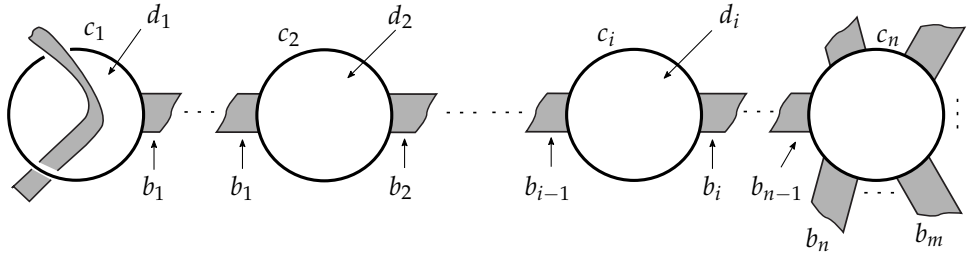

\begin{center}
\begin{lpic}[b(0.75cm),t(0.5cm)]{./figures/MJ_105(12.75cm)}
  \lbl[b]{15,33;$c_1$}
  \lbl[b]{48,32;$c_2$}
  \lbl[b]{102,32;$c_i$}
	\lbl[b]{150,32;$c_n$}
  \lbl[b]{26,35;$d_1$}
  \lbl[b]{67,35;$d_2$}
  \lbl[b]{123,35;$d_i$}
	  \lbl[t]{25,10;$b_1$}
  \lbl[t]{40,10;$b_1$}
  \lbl[t]{67,10;$b_2$}
	  \lbl[t]{92,10;$b_{i-1}$}
  \lbl[t]{120,10;$b_{i}$}
  \lbl[t]{131,10;$b_{n-1}$}
	\lbl[t]{138,1;$b_{n}$}
	\lbl[t]{157,2;$b_{m}$}
	\end{lpic}
\caption{A long flat form of a surface-knot.\label{MJ_105}}
\end{center}
\end{figure}

\begin{proof}

For $BL$ in flat form let us associate a multi-graph $M=(V, E)$ consisting of vertices $v_i$ corresponding to circles $a_i$, and edges between $v_p$ and $v_l$ corresponding to bands from $B$ having its both ends on $a_p$ and $a_l$. If the knotted surface associated to $BL$ is connected then this multi-graph must be connected. Let us fix the circle $a_k$ and take spanning tree $T$ of $M$ with root $v_k$. Orient all its edges towards the root.

Starting from leaves, slide by the move $M_3$ all bands corresponding to edges not in the tree $T$ from $a_r$ to $a_j$ if $v_r$ is a child of $v_j$. Continue the algorithm to obtain the circle $a_k$ having exactly $k-1$ bands attached only by one of its ends, and the other circles $a_1, \dots, a_{k-1}$ each having exactly one band attached and only by one of its ends. Label those bands $b_i$ if it is a band connected to $a_i$ for $i=1, 2, \ldots, k-1$.

We now can slide bands from $B$ to obtain an arc $\gamma\subset a_k$ that has attached bands $b_1, \dots, b_{k-1}$ in the monotonic order of its indices, and none of other bands attached to $\gamma$. If its not monotonic or other band (from $B\backslash \{b_1, \dots, b_{k-1}\}$) are attached, we can slide the bands by the move $M_3$ possibly around $a_i$ for $i<n$, over band $b_i$, to the other side of the arc $\gamma\backslash b_i$. Now slide the end of a band $b_1$ attached to $\gamma$ over the band $b_2$ to $a_2$, slide the end of a band $b_2$ attached to $\gamma$ over the band $b_3$ to $a_3$, etc. obtaining a banded link satisfying conditions $(i)$ and $(ii)$ by relabeling $a_i$ with $c_i$ and $k$ with $n$.

The condition $(iii)$ we can obtain as follows. If by contrary we have $int(d_1)\cap B_l=\emptyset$ then the banded link $BL$ in a long flat form is a split trivial link with attached band $b_1$, then similar to Case 1 in the proof of Theorem \ref{tw2}, we have that the band $b_1$ is the trivial band, so we can delete it by the move $M_1$. Then we proceed then with the new relabeled banded link (with indices decreased by one) in a long flat form as we obtain $d_1\cap B_l\not=\emptyset$, otherwise $n<2$ that contradicts the assumption. 
\end{proof}

\begin{figure}[ht]
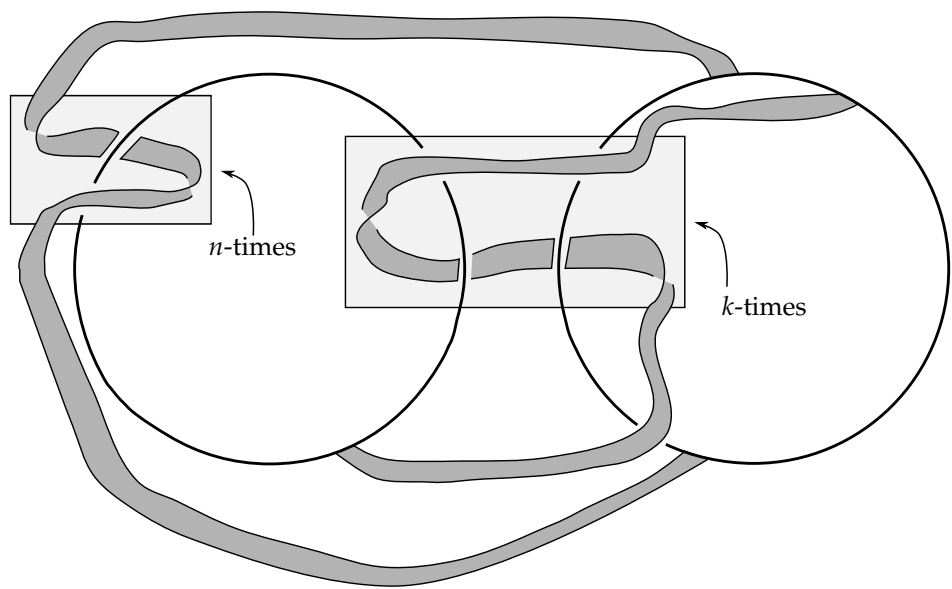

\begin{center}
\begin{lpic}[b(0.75cm)]{./figures/MJ_113(12.5cm)}
  \lbl[t]{140,54;$k$-times}
	\lbl[t]{45,65;$n$-times}
	\end{lpic}
	\caption{A long flat form of $n$-twist-spun of $T(2,2k+1)$ knot.\label{MJ_109}}
\end{center}
\end{figure}

\begin{example}
The marked graph diagram $D$ for the $n$-twist-spun of classical torus $T(2,2k+1)$ knot, for $n,k\in\mathbb{Z}$, corresponds to $3$-strand closure of the surface singular braid word $a_2c_1^{-2k-1}b_2c_1^{2k+1}\Delta^{2n}$ (see \cite{Jab13}). We modify a banded link $BL(D)$ only by an ambient isotopy in $\mathbb{R}^3$ to obtain a banded link in a long flat form presented in Fig. \ref{MJ_109}, where on the right side a band wraps around two arcs $k$ number of times and on the left side, the other band wraps around one arc $n$ number of times. This surface-knot is known to be unknotted $\mathbb{S}^2$ only for $n=\pm1$ or $k=0$ or $k=-1$.
\end{example}

\begin{theorem}\label{tw6}

Let $BL_l$ be a long banded link for a surface-knot $F$ with $|L|=n$ and assume that there exists half-twisted $b\in B^N$ such that $b\cap (d_1\cup d_2\cup \ldots\cup d_{n-1})=\emptyset$. Then $F$ is the connected sum of the unknotted projective plane and some surface-knot.

\end{theorem}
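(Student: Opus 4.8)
The plan is to show that, after an ambient isotopy of $\mathbb{R}^{3}$ and finitely many of Swenton's moves, the band $b$ becomes a \emph{trivial local half-twisted band}: there is a ball $B^{3}\subset\mathbb{R}^{3}$ with $\partial B^{3}\cap BL_{l}$ a pair of points, with $B^{3}\cap BL_{l}$ equal to an unknotted arc $\alpha_{1}\subset c_{n}$ together with all of $b$, and with $b$ the standard half-twisted band on $\alpha_{1}$ inside $B^{3}$. Granting this, $BL_{l}$ has the form pictured in Fig.~\ref{MJ_107}(c), its generic part being the banded link $BL'=(L,\,B\setminus\{b\})$; reversing the moves $M_{3}$, $M_{3}$, $M_{1}$ of the proof of Proposition~\ref{stw4} turns $BL_{l}$ into the split union of $BL'$ with the standard banded link of $\mathbb{P}^{2}_{\pm}$ of Fig.~\ref{MJ_106}(b), joined by a trivial band, and this (Fig.~\ref{MJ_101}) presents $F'\#\mathbb{P}^{2}_{+}$ or $F'\#\mathbb{P}^{2}_{-}$, the sign being the handedness of the half-twist in $b$, where $F'$ is the surface-knot of $BL'$. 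Here $BL'$ is a legitimate banded link for a connected surface-knot: $L_{-}(BL')$ is still the trivial link $L$, $L_{+}(BL')$ is obtained from the trivial link $L_{+}(BL_{l})$ by undoing the single local band surgery of $b$ and is therefore again trivial, and $F'$ is connected because $F$ is. I record first that, since $B^{U}_{l}=\{b_{1},\dots,b_{n-1}\}$ while $b\in B^{N}$, the band $b$ is one of $b_{n},\dots,b_{m}$; relabel so that $b=b_{m}$.

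First I would localize $b$ on $c_{n}$. The two ends of $b$ cut $c_{n}$ into two arcs; with finitely many $M_{3}$-moves I slide the ends on $c_{n}$ of every other band meeting $c_{n}$ — that is, of $b_{n-1}$ and of $b_{n},\dots,b_{m-1}$ — off the shorter of these two arcs, call it $\alpha_{0}$, pushing a band over $b$ itself whenever one of its ends must cross an end of $b$ (exactly the use of $M_{3}$ made in the proof of Theorem~\ref{tw5}). Afterwards $b$ is a half-twisted band attached along $\alpha_{0}\subset c_{n}$, disjoint from all of the other bands, and still disjoint from $d_{1}\cup\cdots\cup d_{n-1}$.

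The crux is to isolate the M\"obius band $\mu_{b}=D_{0}\cup b$, where $D_{0}\subset d_{n}$ is a thin sub-disk bounded by an arc $\alpha_{1}\subset\alpha_{0}$ and an arc of $\mathrm{int}(d_{n})$ missing $b$. Because $b$ and $\alpha_{0}$ avoid $d_{1}\cup\cdots\cup d_{n-1}$, the core circle of $\mu_{b}$ has zero linking number with each of the split planar unknots $c_{1},\dots,c_{n-1}$; using this and the flatness of the long form I would isotope $c_{1},\dots,c_{n-1}$ and their spanning disks — dragging the chain $b_{1},\dots,b_{n-1}$ of fusion bands along — out of a neighbourhood of $\mu_{b}$, and then slide the remaining bands $b_{n},\dots,b_{m-1}$ clear of $\mu_{b}$ with further $M_{3}$-moves, producing the ball $B^{3}$ of the first paragraph. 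Now the component $\gamma$ of the trivial link $L_{+}(BL_{l})$ through $\alpha_{1}$ is an unknot, and undoing the local $b$-surgery returns it to an unknot $\gamma'$; so $b$ is a band on the unknot $\gamma'$ whose surgery is an unknot, whence by Bleiler--Scharlemann \cite{BleSch88}, exactly as in Case~3 of the proof of Theorem~\ref{tw2}, $b$ is the trivial half-twisted band, and since $\gamma'$ and $c_{n}$ coincide inside $B^{3}$ this conclusion transfers to $BL_{l}$.

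The one genuinely delicate point is the isolation in the previous paragraph — pushing the entire $c_{1},\dots,c_{n-1}$-part of $BL_{l}$, with every band attached to it, out of a neighbourhood of $\mu_{b}$ without disturbing $b$; this is exactly where the hypothesis $b\cap(d_{1}\cup\cdots\cup d_{n-1})=\emptyset$, together with the flat split position of the circles $c_{i}$ in a long form, is essential. The remainder is bookkeeping with $M_{1}$, $M_{3}$ and Proposition~\ref{stw4}.
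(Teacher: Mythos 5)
The central step of your plan --- isolating the M\"obius band $\mu_b=D_0\cup b$ in a ball meeting $BL_l$ only in an unknotted arc --- is essentially the whole content of the theorem, and the justification you give for it does not hold up. Vanishing linking number of the core of $\mu_b$ with $c_1,\dots,c_{n-1}$ is far too weak to let you isotope those circles, their disks and the fusion chain off a neighbourhood of $\mu_b$: geometric entanglement is not detected by linking numbers, and in any case the real obstructions are the other bands $b_n,\dots,b_{m-1}$ and the band $b$ itself, which may pierce $d_n$ and wind through the region you want to empty. An $M_3$ slide only carries a band end along the link over another band; it cannot remove an intersection of a band's interior with $D_0$, nor unlink a band from the core of $\mu_b$ --- for that one needs the swim move $M_4$, which your argument never invokes and which is exactly what the paper's proof leans on. The paper, moreover, never tries to make $b$ itself a standard local band: it shrinks $b$ so that its core cuts $d_n$ into two sides, then empties one side $d_n^R$ using an $M_1$-created auxiliary band, $M_4$ passes for the bands meeting $\mathrm{int}(d_n^R)$, and $M_3$ slides along $b$ of all ends attached to $(c_n\setminus b)\cap d_n^R$; the local picture of Fig.~\ref{MJ_107}(c) then appears and Proposition~\ref{stw4} finishes, with whatever entanglement $b$ retains absorbed into the second summand. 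You yourself flag the isolation as ``the one genuinely delicate point,'' but flagging it is not proving it, and it is precisely the step that cannot be waved through.

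Your Bleiler--Scharlemann step has an independent gap. To argue as in Case 3 of Theorem~\ref{tw2} you need the curve $\gamma'$, obtained from $L$ by surgering every band except $b$, to be an unknot; nothing in your setup provides this (in Theorem~\ref{tw2} there were no other bands, so the relevant curve was a component of the trivial link itself), and knowing that the fully surgered link $L_+(BL_l)$ is trivial says nothing about the partially surgered one. Even granting it, the theorem of \cite{BleSch88} standardizes the projective plane built from $\gamma'$, $b$ and caps; to ``transfer'' this to $BL_l$ you would need the trivializing isotopy of $b$ to be supported in $B^3$ relative to its boundary, i.e.\ a relative version of that theorem, which you do not supply. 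So both pillars of your route --- the isolation of $\mu_b$ and the trivialization of $b$ --- are asserted rather than proved, whereas the paper's argument deliberately avoids needing either in this strong form.
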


\begin{proof}

The band $b$ is an element of $B^N$ so its ends are attached to $c_n$. Because $b\cap (d_1\cup d_2\cup \ldots\cup d_{n-1})=\emptyset$, let us shrink $b$ maximally, keeping all circles from $L$ in the fixed position, but allowing bands from $B$ to smoothly isotope along the way. The core of the band $b$ divides now disc $d_n$ into two sides left $d_n^L$ and right $d_n^R$, make the move $M_1$ introducing a new trivial band $t$ attached to the arc $(c_n\backslash b)\cap d_n^R$ and pass all bands intersecting $int(d_n^R)$ by making moves of type $M_4$. Slide then all band ends attached to $(c_n\backslash b)\cap d_n^R$ including that of $t$, by moves of type $M_3$ along $b$, to the arc $(c_n\backslash b)\cap d_n^L$. The band $b$ was half-twisted so $int(d_n^R)\cap (B\backslash b)=\emptyset$ making the most right of long banded link looks like in Fig. \ref{MJ_107}(c), which by Proposition \ref{stw4} makes $F$ the connected sum of the unknotted projective plane and some surface-knot.

\end{proof}

\begin{remark}
In Fig. \ref{MJ_112} we present flat forms for the connected sum of unknotted projective plane $\mathbb{P}^2_+$ with spun trefoil (i.e. $\tau^0(T(2,3))\sharp \mathbb{P}^2_+$) and the connected sum of unknotted projective plane $\mathbb{P}^2_+$ with $2$-twist-spun trefoil (i.e. $\tau^2(T(2,3))\sharp \mathbb{P}^2_+$). These two surface-knots differ only in top of the picture. There is an open question whether they are equivalent (see \cite{BCCK14}).
\end{remark}

\begin{figure}[ht]
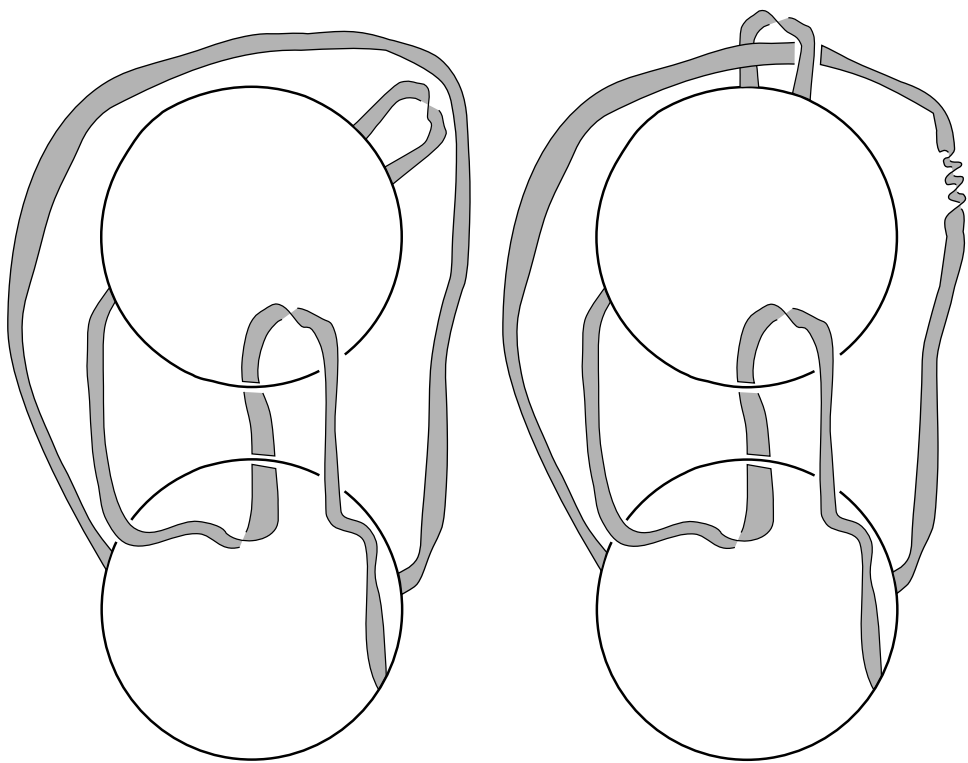

\begin{center}
\begin{lpic}[b(0.75cm)]{./figures/MJ_114(12.75cm)}
	\end{lpic}
	\caption{A flat form of $\tau^0(T(2,3))\sharp \mathbb{P}^2_+$ and of $\tau^2(T(2,3))\sharp \mathbb{P}^2_+$.\label{MJ_112}}
\end{center}
\end{figure}

\section*{Acknowledgments}
Research of the author was partially supported by grant BW 538-5100-B854-15.

\end{document}